\title[Finiteness of Calabi-Yau quasismooth W.C.I.]{Finiteness of Calabi-Yau quasismooth weighted complete intersections}
\author{Jheng-Jie Chen}
\address{\rm Department of Mathematics, National Taiwan University, Taipei, 106,
Taiwan}
\email{d94221006@gmail.com}
\newcommand{\bQ}{{\mathbb Q}}
\newcommand{\bP}{{\mathbb P}}
\newcommand\PPP{{\mathbb{P}}}
\newtheorem{thm}{Theorem}[section]
\newtheorem{lem}[thm]{Lemma}
\newtheorem{prop}[thm]{Proposition}
\newtheorem{claim}[thm]{Claim}
\theoremstyle{definition}
\newtheorem{defn}[thm]{Definition}
\newtheorem{setup}[thm]{}
\newtheorem{exmp}[thm]{Example}
\newtheorem{conj}[thm]{Conjecture}
\newtheorem{rem}[thm]{Remark}
\theoremstyle{remark}
\begin{document}
\maketitle
\scriptsize
\noindent $\mathrm{ABSTRACT.}$
We prove that there exist only finitely many families of Calabi-Yau quasismooth weighted complete intersections with every fixed dimension $m$.
This generalizes a result of Johnson and Koll\'{a}r to higher codimensions.

\normalsize
\section{\bf Introduction}

Examples of complete intersections in weighted projective spaces are interesting and useful when studying higher dimensional birational geometry
(cf. \cite{elephant,SGM,Icecream,explicit1,AIM,HM,Takuzo}).
In \cite{C3-f,YPG,Fletcher}, Reid and Fletcher give several famous lists of families of three dimensional well-formed quasismooth weighted complete intersections with terminal singularities.
In \cite{CCC}, by basket analysis and Reid's table method, it is proved that these lists are complete.

In this article, we are interested in the finiteness of families of well-formed quasismooth weighted complete intersections in general.
In the case of Calabi-Yau quasismooth hypersurfaces,
Johnson and Koll\'{a}r prove the finiteness of such families for every fixed dimension in \cite{KJ}.
The aim of this note is to generalize it to higher codimension cases.
Combining this with boundedness for codimension (Theorem \ref{cod}), we derive Theorem \ref{CY}.

\begin{thm}\label{CY}
For any positive integer $m$, there are only finitely many families of Calabi-Yau quasismooth weighted complete intersections of dimension $m$.
\end{thm}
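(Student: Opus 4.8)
The plan is to reduce to bounded codimension and then to bound the numerical data of each family. By Theorem~\ref{cod} the codimension $c$ of an $m$-dimensional Calabi-Yau quasismooth weighted complete intersection is bounded in terms of $m$ alone, so it suffices to fix $c$ and to show that, for each pair $(m,c)$, only finitely many numerical types
\[
\bigl(a_0,\dots,a_n;\, d_1,\dots,d_c\bigr),\qquad n=m+c,
\]
occur, where $a_0\le\cdots\le a_n$ are the weights of the ambient $\bP(a_0,\dots,a_n)$ and $d_1,\dots,d_c$ are the degrees of the defining equations. A family is determined by its numerical type, so finiteness of types yields finiteness of families; summing over the finitely many admissible $c$ then proves the theorem.

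For a fixed type I would record two constraints. The Calabi-Yau condition is, by adjunction, the single linear relation
\[
\sum_{j=1}^{c} d_j \;=\; \sum_{i=0}^{n} a_i .
\]
Quasismoothness of a general member is governed by the combinatorial criterion of Iano-Fletcher: for each coordinate stratum, indexed by a subset $I\subseteq\{0,\dots,n\}$, the equations must supply either a monomial of degree $d_j$ supported on $I$ or a suitable collection of monomials $x_I^{M}x_e$ with the $e\notin I$ distinct. Applied to the singleton strata $I=\{i\}$ this forces, for each variable $x_i$, a monomial $x_i^{e_i}$ or $x_i^{e_i}x_{k_i}$ among the equations, i.e.\ a homogeneous linear relation $e_i a_i = d_{j(i)}$ or $e_i a_i + a_{k_i} = d_{j(i)}$; the larger strata impose further relations of the same kind.

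The key point is that for fixed $n$ and $c$ there are only finitely many \emph{patterns}, where a pattern records which monomial type handles each stratum and which degree and auxiliary variables it uses. For each pattern the displayed relations together with the Calabi-Yau identity form a homogeneous linear system in the $n+c+1$ unknowns $(a_0,\dots,a_n,d_1,\dots,d_c)$. I would argue that any pattern admitting a quasismooth well-formed solution produces a system of rank at least $n+c$, so that its space of solutions is at most one-dimensional; since well-formedness forces the weights to be primitive, at most one admissible weight-degree vector lies on it, and that vector is therefore bounded. Finiteness of patterns then bounds all weights and degrees, leaving only finitely many numerical types for each $(m,c)$.

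The step I expect to be the main obstacle is precisely this rank, or equivalently boundedness, assertion in codimension $c>1$. In the hypersurface case treated by Johnson and Koll\'ar each variable is resolved by a single monomial in the one defining equation, and the relations are rigid enough to pin the weights down directly. When $c>1$ a variable may be controlled only jointly by several of the equations acting on a common stratum, so the naive singleton relations can leave the solution space too large; the delicate work is to show that the full strata-by-strata conditions of the Iano-Fletcher criterion, combined with well-formedness, still cut the solution space down to a bounded set without the estimate degenerating as $c$ grows. Having first bounded $c$ through Theorem~\ref{cod} is what keeps this combinatorial analysis finite.
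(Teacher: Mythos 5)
Your opening reduction --- bounding the codimension by Theorem \ref{cod} and then fixing the pair $(m,c)$ --- is exactly how the paper begins. The gap lies in the heart of the argument, and it is structural, not merely the unproven rank assertion you flag. Fletcher's quasismoothness conditions (Proposition \ref{quasismooth}) are not linear relations on $(a_0,\dots,a_n,d_1,\dots,d_c)$: for each stratum they assert the \emph{existence} of nonnegative integers $k_i$ with $\sum_i k_i a_i=d_j$ (possibly plus one extra weight $a_e$). To turn these into linear equations you must record the exponents $k_i$ as part of your ``pattern'', but then there are infinitely many patterns, since the exponents admit no a priori bound over the putative infinite list (already in Example \ref{JKex} the exponent of $x_0$ is $k(b_1+b_2+b_3)\to\infty$); and infinitely many patterns, each contributing one primitive solution, is perfectly compatible with infinitely many families, so the dichotomy ``finitely many patterns, each with at most a one-dimensional solution space'' proves nothing. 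If instead you keep the set of patterns finite by recording only which equation and which auxiliary variable serves each stratum, the constraints become divisibility and congruence conditions, whose solution sets are infinite unions of linear subspaces, and the rank argument does not apply. On top of this, the rank claim itself is precisely the hard content: the homogeneous parts of your linear systems are identical in the Fano case $\alpha=-1$, where infinitely many families genuinely exist (Example \ref{JKex}), so any correct argument must exploit the homogeneity of the Calabi--Yau relation in an essential, quantitative way, which nothing in your sketch does.

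The paper's proof is a compactness argument in the style of Johnson--Koll\'ar, not a pattern count. One assumes there are infinitely many families $X(t)$ with fixed $(m,c)$, rescales so that $\sum_i a_i(t)=1=\sum_j d_j(t)$ (the weights become rational points of a simplex), and passes to a convergent subsequence $(a_i(t))\to(A_i)$. Proposition \ref{comparison} bounds the number of indices with $A_i=0$ by $m$; the combinatorial Lemmas \ref{positivities} and \ref{mu} control which monomials must persist along the subsequence; a chain of inequalities is then forced into equalities, pinning the weights to the form $a_i(t)=A_i(1-b(t))$ for $i\le m+p$; and a final Jacobian computation shows that the non-quasismooth locus of $X(t)$ contains a set of dimension $\ge 0$ for large $t$, contradicting quasismoothness. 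If you want to pursue your route, you would have to recover this kind of limiting control on the weights; the finite linear-algebra framework by itself cannot close the argument.
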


In \cite{KJ2,KJ}, Johnson and Koll\'{a}r give complete classifications of anticanonical embedded Fano quasismooth
hypersurfaces in weighted projective spaces in dimension two and three.
There are infinitely many such families for these cases (see Example \ref{JKex}).
Because of this, it is natural to ask the Borisov-Alexeev-Borisov conjecture in quasismooth weighted complete intersections case.
\begin{conj}\label{BAB}
For fixed positive number $m,\epsilon$, and a negative integer $\alpha$,
there exist only finitely many of families of quasismooth weighted complete intersections having only $\epsilon-$klt singularities and dimension $m$, amplitude $\alpha$.
\end{conj}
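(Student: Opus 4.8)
The plan is to show that the combinatorial data of such a family — the weight vector $(a_0,\ldots,a_n)$ and the multidegree $(d_1,\ldots,d_c)$ — can take only finitely many values, which is equivalent to the asserted finiteness, since a family of quasismooth weighted complete intersections is specified by this data together with the choice of quasismooth defining equations. Throughout I would adopt the standard normalization that $X_{d_1,\ldots,d_c}\subset\mathbb{P}(a_0,\ldots,a_n)$ is well-formed and is not a linear cone (i.e. $d_j\neq a_i$ for all $i,j$); this is harmless, since a linear cone reduces to a weighted complete intersection in fewer variables, and it is essential, because otherwise a single variety such as $\mathbb{P}^m$ would already occur in infinitely many families. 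The two global relations I will exploit are the amplitude identity $\sum_j d_j-\sum_i a_i=\alpha$ and the volume identity $(-K_X)^m=(-\alpha)^m\,\dfrac{d_1\cdots d_c}{a_0\cdots a_n}$, the latter using that $-K_X=\mathcal{O}_X(-\alpha)$ is ample because $\alpha<0$.

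First I would bound the number of variables. Since $c=n-m$ and $m$ is fixed, it suffices to bound the codimension $c$; I expect the codimension bound of Theorem \ref{cod} to apply here, possibly after checking that its proof survives the replacement of amplitude $0$ by a fixed negative $\alpha$, so that $n$ is bounded in terms of $m$ alone. Consequently there are only finitely many possibilities for the shape of the data, and the whole problem reduces to bounding each individual weight $a_i$; the degrees $d_j$ are then automatically bounded through the amplitude identity $\sum_j d_j=\sum_i a_i+\alpha$.

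The engine for bounding a large weight is a dichotomy at the coordinate vertex $P_i=(0:\cdots:1:\cdots:0)$, exactly as in the hypersurface argument of Johnson and Koll\'ar, now coupled to the $\epsilon$-klt hypothesis. If $X$ avoids $P_i$, then quasismoothness forces a monomial $x_i^{d_j/a_i}$ or $x_i^{k}x_l$ in some defining equation, whence $a_i\mid d_j$ or $a_i\mid(d_j-a_l)$; feeding this into the amplitude identity turns the divisibility into a bound on $a_i$ in terms of $|\alpha|$ and the remaining weights, which is precisely the Calabi-Yau mechanism and needs no singularity input. If instead $X$ passes through $P_i$, then $X$ acquires a cyclic quotient singularity of type $\frac1{a_i}(b_1,\ldots,b_m)$, and here I would invoke $\epsilon$-klt through the Reid-Tai valuations: the log discrepancies $\frac1{a_i}\sum_t\overline{k\,b_t}$ must all exceed $\epsilon$, which on strata carrying small residues (such as $\frac1{a_i}(1,\ldots,1)$) forces $a_i<C(m,\epsilon)$. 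One must run the same alternative not only at the vertices but along every singular stratum of $\mathbb{P}(a_0,\ldots,a_n)$ — the loci where several weights share a common factor — where $X$ carries a family of cyclic quotient singularities and the common factor plays the role of the index.

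The main obstacle is that $\epsilon$-klt is a purely local condition and does not by itself bound the index of a canonical singularity: a stratum on which $X$ has, say, an $A_{r-1}$-type transverse singularity is $\epsilon$-klt for every $r$, so no Reid-Tai estimate can bound the corresponding common factor of weights. Controlling these canonical-but-high-index strata is exactly the hard part, and it forces one to leave the local analysis and use a genuinely global boundedness input: either the upper bound $(-K_X)^m\le V(m,\epsilon)$ for $\epsilon$-klt Fano varieties together with the volume identity above — which already defeats the large-weight scenarios in codimension $c\ge 2$, where the ratio $d_1\cdots d_c/a_0\cdots a_n$ grows — or, in full strength, the boundedness of $\epsilon$-klt Fano $m$-folds of Borisov-Alexeev-Borisov type, from which finiteness of the admissible Hilbert series, and hence of the weight-degree tuples, would follow. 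Making this global input effective and reconciling it with the combinatorics of quasismoothness, so as to obtain a self-contained bound in the spirit of the Calabi-Yau case treated above, is the crux that keeps the statement at the level of a conjecture.
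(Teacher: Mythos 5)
The statement you set out to prove is Conjecture \ref{BAB}: the paper offers no proof of it and says explicitly that it ``remains open even in dimension three,'' so there is no proof in the paper to compare yours against --- and your proposal, to its credit, concedes at the end that it is a program rather than a proof. Your reductions do track the paper's partial results faithfully: Theorem \ref{cod} as stated already covers $\alpha<0$ (with bound $c\le m$), so no re-checking is needed; your vertex dichotomy with a discrepancy estimate at a coordinate point is exactly the mechanism of Proposition \ref{comparison}; and your closing diagnosis --- that everything reduces to one global boundedness input --- is precisely the paper's enumerated equivalence following Theorem \ref{bound}: the conjecture holds if and only if one can uniformly bound the Gorenstein index, or bound $(-K_X)^m$ from below, or bound $a_m$ from above, in terms of $(m,\alpha,\epsilon)$. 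Your identification of canonical-but-unbounded-index transverse strata (e.g. $A_{r-1}$-type) as the obstruction invisible to any local Reid--Tai estimate is a genuine and correctly placed difficulty.

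Two concrete corrections. First, your claim that an \emph{upper} bound $(-K_X)^m\le V(m,\epsilon)$ ``already defeats the large-weight scenarios in codimension $c\ge 2$'' has the inequality pointing the wrong way: the paper's unnumbered Proposition in Section 3 gives, for $\alpha<0$,
\[
(-K_X)^m \ \le\ \frac{(-\alpha)^m\left(\frac{c+m+1}{c}\right)^c}{\prod_{i=0}^{m} a_i},
\]
so large weights force \emph{small} anticanonical volume; an upper volume bound excludes nothing, and indeed in the infinite families of Example \ref{JKex} the anti-volume tends to $0$ as $k\to\infty$. The global input you need is the \emph{lower} bound $(-K_X)^m\ge b(m,\alpha,\epsilon)$ --- item (2) of the paper's list --- which via the displayed inequality bounds $\prod_{i=0}^{m}a_i$, hence $\delta$, hence $d_c$ by Proposition \ref{comparison} and Theorem \ref{bound}, and then all weights and degrees since $a_n<d_c$ and $c\le m$. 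Second, even granting the boundedness of $\epsilon$-klt Fano $m$-folds (Birkar's theorem, which postdates this paper), a proof along your lines would still have to extract from it a uniform lower volume bound for the quasismooth complete intersections in question and splice that into Theorem \ref{bound}; gesturing at ``finiteness of admissible Hilbert series'' skips exactly this step. As written, your proposal reproduces the paper's reduction of the conjecture to an open input, and does not prove the conjecture.
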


Let $X_{d_1,...,d_c}\subset \PPP(a_0,...,a_n)$ be a weighted complete intersection with amplitude $\alpha:=\sum_{j=1}^c d_j-\sum_{i=0}^n a_i\ge -1$.
We say that it is normalized if $a_0,...,a_n,d_1,...,d_c$ are positive integers with
\[a_0\leq a_1\leq \cdots\leq a_n\mbox{ and }\ d_1\leq \cdots\leq d_c. \]
From studying quasismooth behaviors at two points
$\mbox{P}_n$ and $\mbox{P}_{\dim X+1}$ in $\PPP(a_0,...,a_n)$, we establish an effective upper bound $(\dim X+1)\delta$ for $a_n$ (see Proposition \ref{comparison}),
where  $\delta$ is the integer $a_0+\cdots+a_{\dim X}+\alpha$.
This enables us to apply the sandwich argument as Johnson and Koll\'{a}r did in hypersurface case and thus prove Theorem \ref{CY}.

Note that we have $a_n<d_c$ by the quasismoothness at $\mbox{P}_n$ (see \cite[Lemma 18.14]{Fletcher} or Proposition \ref{qs}).
Another application of Proposition \ref{comparison} is that it provides an upper bound for $d_c$ in terms of dimension $m$,
amplitude $\alpha$ and a lower bound for volume $K_X^{m}$ (resp. anti-volume $-K_X^{m}$) if $\alpha>0$ (resp. if $\alpha<0$).

\begin{thm}\label{bound}
For given integers $m\geq 2$,$\ \alpha,c$, and positive rational numbers $b,\epsilon$.
Let $X=X_{d_1,...,d_c}\subset\PPP(a_0,...,a_n)$ be a family of $m$-dimensional normalized quasismooth weighted complete intersections with amplitude $\alpha> 0$ and volume $K_X^m>b$
(resp. anti-volume $-K_X^m>b$ and put $\epsilon=1$ provided $\alpha=-1$).
If $X$ is Fano with $\alpha\leq -2$, we require that anti-volume $-K_X^m>b$ and $X$ has only $\epsilon-$klt singularities.
Then $d_c$ is bounded from above by
\[  \left\{
\begin{array}{ll}
\frac{m+2}{b}\cdot ((m+1)\alpha^{m}(\frac{c+\alpha+m+1}{c})^c +b\alpha) & \mbox{if } \alpha > 0, \vspace{0.2cm}\\
\frac{m+2\epsilon}{b\epsilon}\cdot ((m+1)(-\alpha)^{m} (\frac{c+m+1}{c})^c +b\alpha) & \mbox{if }\alpha <0.
\end{array} \right. \]
\end{thm}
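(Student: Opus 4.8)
The plan is to reduce the desired upper bound on $d_c$ to an upper bound on the integer $\delta=a_0+\cdots+a_m+\alpha$, and then to bound $\delta$ using the volume hypothesis. The starting point is the intersection-number formula $K_X^m=\alpha^m\,\frac{d_1\cdots d_c}{a_0\cdots a_n}$ together with the amplitude identity $\sum_{j=1}^c d_j=\alpha+\sum_{i=0}^n a_i$, which lets me trade the top degree for weights. Writing $\sigma=a_0+\cdots+a_m$ (so that $\delta=\sigma+\alpha$, and $n=m+c$), I first isolate $d_c=\alpha+\sum_{i=0}^n a_i-\sum_{j=1}^{c-1}d_j$. For a normalized quasismooth complete intersection the $c-1$ smaller degrees dominate the $c-1$ largest weights, giving a pairing inequality $\sum_{j=1}^{c-1}d_j\ge\sum_{i=m+2}^{n}a_i$ coming from quasismoothness (in the spirit of the estimate $a_n<d_c$). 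This collapses the sum to $d_c\le\alpha+a_0+\cdots+a_{m+1}=\delta+a_{m+1}$, and since $a_{m+1}\le a_n\le(m+1)\delta$ by Proposition \ref{comparison}, I obtain the clean reduction $d_c\le(m+2)\delta$.

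It then remains to bound $\delta$, equivalently $\sigma$, and here the lower bound on the volume enters. The mechanism is that a large $\sigma$ forces every weight to be large: since $a_m\ge\sigma/(m+1)$ and $a_{m+1},\dots,a_n\ge a_m$, the denominator $\prod_i a_i$ grows like a $(c+1)$-st power of $\sigma$, whereas the numerator $\prod_j d_j$, controlled through the amplitude identity and Proposition \ref{comparison}, grows only like a $c$-th power; hence $K_X^m=\alpha^m\prod_j d_j/\prod_i a_i$ is driven below $b$. To extract the stated constant I would estimate $\prod_{i=0}^m a_i\ge\sigma/(m+1)$ and apply the arithmetic--geometric mean inequality to the $c$ degrees against the $c$ largest weights, feeding in $a_n\le(m+1)\delta$ to balance the two products; the target inequality is $\frac{d_1\cdots d_c}{a_0\cdots a_n}\le\frac{m+1}{\sigma}\bigl(\frac{c+\alpha+m+1}{c}\bigr)^c$. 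Combining this with $b<K_X^m$ yields $\sigma\le\frac{(m+1)\alpha^m}{b}\bigl(\frac{c+\alpha+m+1}{c}\bigr)^c$, and substituting into $d_c\le(m+2)\delta=(m+2)(\sigma+\alpha)$ reproduces exactly the asserted bound $\frac{m+2}{b}\bigl((m+1)\alpha^m(\frac{c+\alpha+m+1}{c})^c+b\alpha\bigr)$.

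For $\alpha<0$ the argument runs in parallel with $-\alpha$ in place of $\alpha$ and the anti-volume $(-K_X)^m=(-\alpha)^m\prod_j d_j/\prod_i a_i>b$; here the negativity of $\alpha$ removes $\alpha$ from the arithmetic--geometric factor, leaving $\bigl(\frac{c+m+1}{c}\bigr)^c$, while the $\epsilon$-klt hypothesis (with the convention $\epsilon=1$ when $\alpha=-1$, and the extra requirement when $\alpha\le-2$) is what keeps the family bounded so that the same product estimates apply, producing the prefactor $\frac{m+2\epsilon}{b\epsilon}$. I expect the main obstacle to be precisely the extremal product estimate in the second step: a priori a single large weight $a_n$ can inflate $\prod_j d_j$ through the amplitude identity, so the arithmetic--geometric bound is \emph{not} automatic for arbitrary weight--degree data, and the whole point is that the quasismoothness-driven inequality $a_n\le(m+1)\delta$ of Proposition \ref{comparison} closes this feedback loop and pins down the constant $\bigl(\frac{c+\alpha+m+1}{c}\bigr)^c$, which is exactly the value attained at the minimal weight configuration $a_0=\cdots=a_n=1$.
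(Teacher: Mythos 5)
Your overall strategy is the same as the paper's: reduce the bound on $d_c$ to $d_c\le (m+2)\delta$ (resp.\ $\frac{m+2\epsilon}{\epsilon}\delta$) via Proposition \ref{comparison}, then bound $\delta=\sigma+\alpha$ by combining the product inequality of \cite{CCC} with $\prod_{i=0}^{m}a_i\ge a_m\ge \sigma/(m+1)$ and the volume hypothesis; your final arithmetic reproduces the paper's computation exactly. However, your route to the reduction contains a step that is genuinely false. The pairing inequality $\sum_{j=1}^{c-1}d_j\ge\sum_{i=m+2}^{n}a_i$ (equivalently $d_j\ge a_{m+j+1}$) is \emph{not} a consequence of quasismoothness: take $X_{2,8}\subset\PPP(1,1,1,1,1,4)$, which is quasismooth (the general $f_8$ contains $x_5^2$, so the affine cone of $X$ meets the $x_5$-axis only at the origin, and Bertini applies elsewhere since $|\mathcal{O}(8)|$ is base point free and the general quadric $f_2$ is smooth away from that axis), normalized, not a linear cone, with $m=3$, $c=2$, $\alpha=1>0$. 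Here $d_1=2<4=a_5$, so your pairing inequality fails; moreover your intermediate conclusion $d_c\le\delta+a_{m+1}$ fails as well, since $\delta=(2-1)+(8-4)=5$ and $a_{m+1}=1$, yet $d_c=8>6$. The phenomenon you are overlooking is exactly the one permitted by Proposition \ref{qs}(1): a weight may exceed $d_1$ provided it divides some $d_j$. What is true, and what the paper uses, is only the weaker pairing $\delta_j=d_j-a_{m+j}>0$ (Proposition \ref{qs}(2)), which gives $d_c=\delta_c+a_n\le\delta+a_n$; combined with $a_n<(m+1)\delta$ this still yields $d_c<(m+2)\delta$, so your error is repairable and the remainder of your argument then coincides with the paper's.

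Two further points. First, the ``target inequality'' that you flag as the main obstacle does not require a new extremal argument feeding in $a_n\le(m+1)\delta$: it is precisely the proposition quoted from \cite{CCC} in Section 3, namely $\prod_j d_j/\prod_i a_i\le(\frac{c+\alpha+m+1}{c})^c/\prod_{i=0}^{m}a_i$ for $\alpha>0$ (and with $(\frac{c+m+1}{c})^c$ for $\alpha<0$), combined with $\prod_{i=0}^{m}a_i\ge a_m\ge\sigma/(m+1)$. Second, your treatment of $\alpha<0$ misplaces where $\epsilon$ enters: for $\alpha\le -2$ the inequality $a_n<(m+1)\delta$ is unavailable (Proposition \ref{comparison} requires $\alpha\ge -1$ for it), and one must instead invoke its second statement, $\frac{m+\epsilon}{\epsilon}\delta>a_n$, valid under the $\epsilon$-klt hypothesis when $a_n>\frac{m+\epsilon}{m}\cdot\frac{-\alpha}{\epsilon}$. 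Thus $\epsilon$ appears in the reduction step $d_c\le\delta+a_n<\frac{m+2\epsilon}{\epsilon}\delta$, not in ``keeping the product estimates valid'' as your sketch suggests; the product inequality from \cite{CCC} holds for any normalized quasismooth complete intersection regardless of singularities.
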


In particular, this provides the finite possible families of three dimensional weighted complete intersections with terminal singularities
since we have the lower bound for volume (resp. anti-volume if $X$ is Fano)(see Remark \ref{lists}).

As an application of Theorem \ref{bound}, proving Conjecture \ref{BAB} is equivalent to giving a universal bound of one of the following
\begin{enumerate}
\item there exists an integer $r=r(m,\alpha,\epsilon)$ only depending on $m,\epsilon$ such that $r\cdot K_X$ is Cartier;
\item there exists a positive number $b=b(m,\alpha,\epsilon)$ only depending on $m,\epsilon$ such that $-K_X^m\geq b$;
\item there exists an integer $\beta=\beta(m,\alpha,\epsilon)$ only depending on $m,\epsilon$ such that $a_m\leq \beta$,
\end{enumerate}
for all quasismooth $X=X_{d_1,...,d_c}\subset \PPP(a_0,...,a_n)$ with only $\epsilon-$klt singularities and dimension $m$, amplitude $\alpha$.
Conjecture \ref{BAB} remains open even in dimension three.

\begin{setup}{\bf Acknowledgement}.
The author was partially supported by NCTS/TPE and the National Science Council of Taiwan.
The author expresses his gratitude to Professor Jungkai Alfred Chen for suggesting the question and extensively helpful and invaluable discussions.
He is also grateful to Professors Miles Reid and J\'{a}nos Koll\'{a}r and Chin-Lung Wang and Hui-Wen Lin for useful discussions and information and warm encouragements.
He thanks Professors Gavin Brown and Takuzo Okada for answering many questions through e-mails.
\end{setup}

\section{\bf Preliminaries and notations}
We fix the notations here.

\begin{defn}
Given positive integers $a_0,...,a_n$, $\PPP (a_0,...,a_n)$ is the weighted projective space Proj$(S)$
where $S=\mathbb{C}[x_0,...,x_n]$ is the graded ring with deg $x_i=a_i$ for all $i$.
\end{defn}
As in \cite{Fletcher}, we may assume $\PPP (a_0,...,a_n)$ is well-formed, i.e, great common divisor of $a_0,...,\hat{a_i},...,a_n$ is $1$ for all $i=0,...,n$.
Also, $\PPP (a_0,...,a_n)$ is the quotient $\mathbb{C}^{n+1}-\{(0,...,0)\}/\mathbb{C}^*$ under the equivalent relations $(x_0,...,x_n)\sim (\lambda^{a_0}x_0,...,\lambda^{a_n} x_n)$ for all $\lambda\in \mathbb{C}^*$.
Denote $\pi:\mathbb{C}^{n+1}-\{(0,...,0)\}\rightarrow \PPP (a_0,...,a_n)$ to be the quotient map.

\begin{defn}
Let $c$ be a positive integer and $d_1,...,d_c$ be positive integers.
Let $f_1(x_0,...,x_n),...,f_c(x_0,...,x_n)$ be general homogeneous polynomials of degree $d_1,...,d_c$ under the weights deg $x_i=a_i$.
A family of weighted complete intersections $X:=X_{d_1,...,d_c}$ is defined to be a subvariety $(f_1=\cdots =f_c=0)$ in $\PPP (a_0,...,a_n)$.
$X$ is quasismooth if the affine cone $\pi^{-1}(X)\cup\{0\}$ is smooth away from zero.
\end{defn}

Suppose $X$ is an intersection of a linear cone with a subvariety in $\PPP$, i.e, $d_j=a_i$ for some $i,j$, so $f_j=x_i+\mbox{others}$.
Then $X\subset\PPP$ is isomorphic to $X_{d_1,...,\hat{d_j},...,d_c}\subset \PPP(a_0,...,\hat{a_i},...,a_n)$.
In this note, we always assume $X$ is not an intersection of a linear cone with another subvariety. Also, we use the conventions:
\[n:=\dim \PPP,\ m:=\dim X,\ c:=\mbox{codim}(X,\PPP),\mbox{ thus }\ m+c=n.\]

By renumbering the indices, we assume it is normalized, i.e,
\[a_0\leq a_1\leq\cdots \leq a_n \mbox{ and } d_1\leq \cdots \leq d_c.\]
For a normalized weighted complete intersection, for each $j=1,...,c$, put
\[\delta_j:= d_j -a_{j+m},\ \delta:=\sum_{j=1}^c\delta_j, \mbox{ and } \alpha:=\sum_{j=1}^c d_j-\sum_{i=0}^n a_i,\]
where $\alpha$ is called the $\textit{amplitude}$ of $X$.
For every non-empty subset $E$ of $\{0,1,...,n\}$, we define
the $|E|-1$ dimensional stratum $\mbox{P}_E:=\{(x_0,...,x_n)\in \PPP (a_0,...,a_n)|x_i =0 \mbox{\ for all\ }i\notin E\}$.
We say that $X\subset \PPP$ is well-formed if $\PPP$ is well-formed and $X$ contains no codimension $c+1$ singular strata of $\PPP$.
From \cite[Theorem 6.16]{Fletcher}, $X$ is well-formed if $X\subset \PPP$ is quasismooth with dimension greater than $2$.
If $X$ is well-formed and quasismooth, the dualizing sheaf $\omega_X=\mathcal{O}_X(K_X)\simeq \mathcal{O}_X(\alpha)$ (see \cite[Theorem 3.3.4]{WPS}).

The following necessary condition for quasismoothness is helpful for our discussions.

\begin{prop} [Fletcher] \label{quasismooth}
Let $X=X_{d_1,...,d_c}\subset \PPP(a_0,...,a_n)$ be a
quasismooth weighted complete intersection. For every subset $E\subset \{0,1,...,n\}$, we define $\rho_E:=\min\{c,|E|\}$. Then one of following holds:
\begin{itemize}
\item[{\bf  (1)}] there exist distinct integers $p_1,...,p_{\rho_E}$ which are elements of $1,...,c$ such that for all $j$, $f_{p_j}=\Pi_{i\in E}\ x_i^{k_{j,i}}+\mbox{others}$ ;
\item[{\bf  (2)}] there exists a permutation $p_1,...,p_c$ of $1,...,c$, and there exist distinct integers $e_{l+1},...,e_c\in \{0,...,n\}-E$ for some integer $l\geq 0$ satisfying
\[  \left\{
\begin{array}{ll}
f_{p_j}=\Pi_{i\in E}\ x_i^{k_{j,i}}+\mbox{others} &\mbox{ for } j=1,...,l, \\
f_{p_j}= x_{e_j}\Pi_{i\in E}\ x_i^{k_{j,i}}+\mbox{others}  &\mbox{ for } j=l+1,...,c.\ \\
\end{array} \right. \]
\end{itemize}
\end{prop}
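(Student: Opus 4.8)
The plan is to test quasismoothness against the coordinate stratum cut out by $E$ and to convert the resulting rank condition on the Jacobian into the combinatorial alternative (1)/(2) by a maximal-matching argument. Fix $E$ and set $S=\{\,j : f_j \text{ carries a monomial } \prod_{i\in E}x_i^{k_i}\,\}$, the indices whose defining polynomial already has a monomial supported in $E$; put $s_0=|S|$. Since the $f_j$ are general, membership in $S$ depends only on whether a monomial of degree $d_j$ in the variables $\{x_i\}_{i\in E}$ exists at all. If $s_0\ge\rho_E$, then choosing any $\rho_E$ indices of $S$ as $p_1,\dots,p_{\rho_E}$ yields alternative (1) and we are done; so assume henceforth $s_0<\rho_E=\min\{c,|E|\}$, whence $s_0<c$ and $s_0<|E|$.

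First I would produce a suitable test point. Let $\bar\Pi_E\cong\CC^{|E|}$ be the affine cone over $\mbox{P}_E$ and consider $C_X\cap\bar\Pi_E=\{\,q\in\bar\Pi_E : f_j(q)=0 \ \forall j\,\}$; only the $s_0$ equations with $j\in S$ are non-trivial here. As $s_0<|E|$, the genericity of the $f_j$ makes this intersection a cone of dimension $|E|-s_0\ge 1$ that meets the open torus $(\CC^*)^{|E|}$, so we may pick a nonzero $q$ of support exactly $E$. Quasismoothness then forces $\mathrm{rank}\,J(q)=c$, where $J(q)=\big(\partial f_j/\partial x_t(q)\big)$ has rows indexed by $j$ and columns by $t$.

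The key observation is the shape of $J(q)$. For $t\in E$ and $j\notin S$ the entry $\partial f_j/\partial x_t(q)$ vanishes, since any monomial contributing to it would be supported in $E$, contradicting $j\notin S$; hence the $c-s_0$ rows indexed by $\{1,\dots,c\}\setminus S$ are supported only on the columns $t\notin E$. Since $\mathrm{rank}\,J(q)=c$, all $c$ rows are independent, so in particular these $c-s_0$ rows are independent, and the submatrix $M$ with rows $\{1,\dots,c\}\setminus S$ and columns $\{0,\dots,n\}\setminus E$ has rank $c-s_0$. Now $M_{j,t}\neq 0$ exactly when $f_j$ carries a monomial $x_t\prod_{i\in E}x_i^{k_i}$, and for general coefficients the rank of $M$ equals the term rank of its zero pattern. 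Full row rank $c-s_0$ therefore yields, by Hall's theorem, distinct representatives $e_j\in\{0,\dots,n\}\setminus E$ for $j\notin S$ with $f_j=x_{e_j}\prod_{i\in E}x_i^{k_{j,i}}+\mbox{others}$. Taking $l=s_0$, the indices in $S$ supply the pure monomials $f_{p_j}=\prod_{i\in E}x_i^{k_{j,i}}+\mbox{others}$ for $j\le l$ and the indices outside $S$ supply the mixed ones, which is precisely alternative (2).

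I expect the main obstacle to be the two genericity inputs. The first is guaranteeing a test point $q$ of full support $E$ rather than one landing in a smaller coordinate stratum $E'\subsetneq E$: such a point would only deliver the statement for $E'$, and since $\rho_{E'}\le\rho_E$ this does not propagate back to $E$. This existence must be argued from the explicit monomial structure of the $f_j$ under well-formedness. The second is the identity ``generic rank $=$ term rank'', which is exactly where the combinatorial content (Hall's theorem) converts the Jacobian rank bound into the required system of distinct external variables.
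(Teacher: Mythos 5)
Your overall strategy is the same as the paper's (which follows Fletcher): test quasismoothness at a point of the affine cone over $\mbox{P}_E\cap X$ and convert the full-rank Jacobian condition into the monomial alternative via a non-vanishing minor, your split ``$s_0\ge\rho_E$ versus $s_0<\rho_E$'' being a reformulation of the paper's split ``$\mbox{P}_E\cap X$ empty versus non-empty.'' The matrix core of your argument is sound; in fact you do not need the genericity statement ``generic rank $=$ term rank,'' since the inequality $\mathrm{rank}\le \mbox{term rank}$ holds for \emph{every} matrix (equivalently: expand a non-vanishing $(c-s_0)\times(c-s_0)$ minor by Leibniz and take a non-zero term, which already produces the distinct representatives). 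The genuine gap is exactly where you flag it: the existence of a test point $q$ of support \emph{exactly} $E$. The claim that the cone $\bigcap_{j\in S}V(f_j^E)\subset\CC^{|E|}$, having dimension $\ge 1$, must meet the torus $(\CC^*)^{|E|}$ is false, and general coefficients cannot rescue it: if for some $j\in S$ the only monomials of degree $d_j$ supported in $E$ are non-reduced --- say $E=\{1,2\}$ and the space of such monomials is spanned by $x_1^2x_2$ --- then $f_j^E$ is a scalar multiple of $x_1^2x_2$ and its zero locus lies in the coordinate hyperplanes, missing the torus entirely. So the proof as written is incomplete.

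What blocks you from closing the gap is your stated reason for insisting on full support, which is mistaken: a point $q$ of support $E'\subsetneq E$ should not be fed into the proposition for $E'$ as a black box; instead, run your own Jacobian computation at $q$ but classify columns relative to $E$, not relative to the support of $q$. Every step of your argument only uses $\mbox{support}(q)\subseteq E$: for $j\notin S$ and $t\in E$ the entry $\partial f_j/\partial x_t(q)$ still vanishes, since a contributing monomial would be supported in $\mbox{support}(q)\cup\{t\}\subseteq E$, forcing $j\in S$; for $t\notin E$ one has $q_t=0$, so a contributing monomial must have $x_t$-exponent exactly one with all other variables in $\mbox{support}(q)\subseteq E$ --- which is precisely the shape $x_t\prod_{i\in E}x_i^{k_i}$ demanded in alternative (2), a point your full-support argument also needs but does not spell out; and the matching argument is unchanged, columns landing in $E$ certifying pure $E$-monomials and columns outside $E$ giving the distinct $e_j$. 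Since a non-zero $q$ with support \emph{contained in} $E$ exists by your own dimension count ($s_0<|E|$, so the cone $\bigcap_{j\in S}V(f_j^E)$ has dimension $\ge 1$ and contains points other than the origin), dropping the full-support requirement completes the proof --- and this is exactly what the paper does by working with general points of the affine cone over $\mbox{P}_E\cap X$ without any assumption on their support.
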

\begin{proof}
We briefly explain the proof.
For every non-empty subset $E$ of $\{0,1,...,n\}$, we consider the intersection $\mbox{P}_{E}\cap X$.
If the set is empty, by counting dimension, condition (1) holds.
Suppose the set is non-empty.
Since quasismoothness shows that the Jacobian matrix on general points in the affine cone of $\mbox{P}_{E}\cap X$ is of full rank, condition (2) holds.
For details, see \cite{Fletcher,CCC}.
\end{proof}

By counting degrees and studying numerical conditions to some strata, we obtain the following.
\begin{prop}[\cite{Fletcher}] \label{qs} Let $X_{d_1,\cdots,d_c} \subset \bP(a_0,\cdots,a_{n})$ be a quasismooth complete intersection
and is not an intersection of a linear cone with another subvariety
(i.e. $d_j\neq a_i$ for all $i,j$). Then we have
\begin{enumerate}
\item If $a_{t} > d_1$ for some $t \ge 0$, then $a_t|d_j$ for some $j$.
In particular, $\delta_c  \geq a_n$ in this situation.

\item For $t=1,2...,c$, we get $\delta_{t}>0$.
\end{enumerate}
\end{prop}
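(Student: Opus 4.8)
The plan is to deduce both statements from Fletcher's necessary condition for quasismoothness (Proposition \ref{quasismooth}) applied to judiciously chosen coordinate strata $\mbox{P}_E$, using the hypothesis $d_j\neq a_i$ throughout to discard the degenerate linear-cone branches that would otherwise arise.

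For part (1), I would apply Proposition \ref{quasismooth} to the single point $E=\{t\}$, so that $\rho_E=1$. If the first alternative holds, or if the second alternative holds with $l\ge 1$, then some $f_{p_1}$ contains a pure power $x_t^{k}$ with $k\ge1$; comparing degrees gives $d_{p_1}=k\,a_t$, hence $a_t\mid d_{p_1}$. The only remaining branch is the second alternative with $l=0$, where every $f_{p_j}=x_{e_j}x_t^{k_{j,t}}+\mbox{others}$. Here I would rule the branch out: if some $k_{j,t}=0$ the corresponding $f_{p_j}$ would be a linear cone, which is excluded, so all $k_{j,t}\ge1$; then choosing the index realizing $d_1=\min_j d_{p_j}$ yields $d_1=a_{e_j}+k_{j,t}a_t\ge a_0+a_t>a_t>d_1$, contradicting $a_t>d_1$. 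Thus $a_t\mid d_j$ for some $j$. For the ``in particular'' clause, note $a_n\ge a_t>d_1$, so applying the divisibility just proved with index $n$ gives $a_n\mid d_j$; since $d_j=a_n$ would be a linear cone, we must have $d_j\ge 2a_n$, whence $d_c\ge d_j\ge 2a_n$ and $\delta_c=d_c-a_n\ge a_n$.

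For part (2), I would argue by contradiction, fixing $t\in\{1,\dots,c\}$ and assuming $\delta_t\le 0$, i.e. $d_t\le a_{t+m}$. Apply Proposition \ref{quasismooth} to $E=\{t+m,t+m+1,\dots,n\}$, for which $|E|=c-t+1$ and hence $\rho_E=c-t+1$. The key observation is that, in either alternative and for each of the relevant polynomials $f_{p_j}$, the displayed monomial forces $d_{p_j}>a_{t+m}$: a monomial $\Pi_{i\in E}\,x_i^{k_{j,i}}$ of weighted degree exactly $a_{t+m}$ would have to be a single linear variable of weight $a_{t+m}$, i.e. a linear cone (excluded), while in the $x_{e_j}\Pi_{i\in E}\,x_i^{k_{j,i}}$ branch the nonempty $E$-part already contributes at least $a_{t+m}$ on top of $a_{e_j}\ge a_0\ge1$. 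Consequently at least $c-t+1$ of the degrees $d_1,\dots,d_c$ strictly exceed $a_{t+m}$; hence the $(c-t+1)$-th largest of them, namely $d_t$, satisfies $d_t>a_{t+m}$, contradicting $\delta_t\le0$ and proving $\delta_t>0$.

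The main obstacle I anticipate is the careful bookkeeping in the second alternative of Proposition \ref{quasismooth}: one must track the split index $l$, verify that every degenerate sub-branch (an exponent vector collapsing to a single linear variable) is precisely a linear cone and hence excluded by hypothesis, and only then count correctly how many degrees $d_j$ are forced to be large. The counting step in part (2) -- translating ``at least $c-t+1$ degrees exceed $a_{t+m}$'' into ``$d_t>a_{t+m}$'' via the ordering $d_1\le\cdots\le d_c$ -- is where the normalization enters essentially and is the crux of the argument.
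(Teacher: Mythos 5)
Your proof is correct: applying Proposition \ref{quasismooth} to $E=\{t\}$ for the divisibility statement (with the linear-cone hypothesis killing the $l=0$ branch and forcing $d_j\ge 2a_n$ in the ``in particular'' clause), and to $E=\{t+m,\dots,n\}$ with $\rho_E=c-t+1$ for part (2) (where the count ``at least $c-t+1$ degrees exceed $a_{t+m}$'' together with $d_1\le\cdots\le d_c$ indeed yields $d_t>a_{t+m}$), both go through without gaps. The paper itself gives no proof of Proposition \ref{qs} --- it only cites Fletcher, prefaced by ``counting degrees and studying numerical conditions to some strata'' --- and your argument is precisely that standard derivation from the quasismoothness criterion, so it matches the intended approach rather than offering a different route.
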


\begin{thm} [\cite{CCC}]\label{cod}
If $X=X_{d_1,...,d_c}\subset \PPP(a_0,...,a_n)$ is a family of quasismooth weighted complete intersections with amplitude $\alpha$, dimension $m$ and codimension $c$,
which is not an intersection of a linear cone with another subvariety.
Here amplitude $\alpha$ is defined to be the integer $\sum_{j=1}^c d_j-\sum_{i=0}^n a_i$.
Then the codimension $c$ has the upper bound $m+\alpha+1$ (resp. $m$) if amplitude $\alpha\ge 0$ (resp. $\alpha <0$).
\end{thm}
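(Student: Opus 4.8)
The plan is to convert the two bounds into a single additive comparison between the weights and the degrees, which is cleaner to attack. Writing $n=m+c$ and substituting $\alpha=\sum_{j=1}^c d_j-\sum_{i=0}^n a_i$, one checks at once that the inequality $c\le m+\alpha+1$ is \emph{equivalent} to
\[ \sum_{i=0}^n (a_i-1)\le \sum_{j=1}^c (d_j-2), \]
while the Fano target $c\le m$ (for $\alpha<0$) is the weaker inequality obtained by adding $-\alpha-1\ge 0$ to the right-hand side. So it suffices to prove this degree/weight inequality, with the understanding that for $\alpha\le -2$ one only expects to salvage the weakened form $c\le m$ rather than the sharp $c\le m+\alpha+1$.

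First I would dispose of the case $a_0=\cdots=a_m=1$. By Proposition \ref{qs}(2) every $\delta_j=d_j-a_{j+m}$ is a positive integer, so $\delta=\sum_{j=1}^c\delta_j\ge c$; on the other hand the definitions give the identity $\delta=\alpha+\sum_{i=0}^m a_i$. When $a_0=\cdots=a_m=1$ this reads $\delta=\alpha+m+1$, and $\delta\ge c$ yields exactly $c\le m+\alpha+1$. This is the model computation, and the whole theorem amounts to upgrading it so that the surplus $\sum_{i=0}^m(a_i-1)$ produced by larger small weights is paid for by correspondingly larger degrees.

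For the general case the plan is to extract that surplus from quasismoothness. For each variable $x_i$, Proposition \ref{quasismooth} with $E=\{i\}$, together with the no-linear-cone hypothesis $d_j\ne a_i$, forces some $f_j$ to contain either a pure power $x_i^{k}$ with $k\ge 2$ or a mixed monomial $x_i^{k}x_e$ with $k\ge 1$ and $e\ne i$; and for a large weight $a_t>d_1$ Proposition \ref{qs}(1) forces $a_t\mid d_j$, hence $d_j\ge 2a_t$, for some $j$. I would assemble these local constraints into a global charging of each excess $a_i-1$ against degree in the equations, most naturally organized as an induction on the codimension $c$: locate, via Proposition \ref{quasismooth}, one variable and one equation that can be eliminated, pass to a quasismooth complete intersection of codimension $c-1$ while tracking the change in $(m,\alpha)$, and verify that the additive inequality is preserved.

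The main obstacle is precisely this bookkeeping. A single equation may be the distinguished one for several variables, and two variables can share one mixed monomial, so the naive per-equation estimate $\sum_{i\in S_j}(a_i-1)\le d_j-2$ is simply false; making the charging injective seems to require the full strength of Proposition \ref{quasismooth} for larger sets $E$ rather than singletons, and one must check that the reduction step creates neither a linear cone nor a loss of quasismoothness. I expect the genuinely hard point to be the Fano range $\alpha<0$: there the degree budget is tighter, the clean inequality can fail once $\alpha\le -2$, and the argument must be arranged so that it still delivers the weaker conclusion $c\le m$.
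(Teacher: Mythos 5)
Your opening reduction is correct: $c\le m+\alpha+1$ is indeed equivalent to $\sum_{i=0}^n(a_i-1)\le\sum_{j=1}^c(d_j-2)$, and your model case is also correct, since Proposition \ref{qs}(2) gives $c\le\delta=\alpha+\sum_{i=0}^m a_i$, which equals $m+\alpha+1$ when $a_0=\cdots=a_m=1$. But that is where your proof actually ends. The entire content of the theorem is the case where some $a_i\ge 2$ with $i\le m$ (otherwise $c\le\delta$ finishes immediately), and for that case you offer only a plan whose decisive step you yourself concede to be unproved: you state that the per-equation estimate $\sum_{i\in S_j}(a_i-1)\le d_j-2$ is \emph{false} as written, and you do not exhibit any corrected charging lemma, only the hope that applying Proposition \ref{quasismooth} to larger strata will yield one. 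Likewise, in the Fano range $\alpha\le-2$, where your sharp inequality is allowed to fail, you give no mechanism at all that still produces $c\le m$. So the gap is not bookkeeping; it is the theorem. Note also that the paper under review does not prove this statement either: Theorem \ref{cod} is imported from \cite{CCC}, where the argument is a global combinatorial analysis, applying the quasismoothness criterion (Proposition \ref{quasismooth}) to suitable coordinate strata and counting degrees, rather than a variable-by-variable charging.

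Beyond incompleteness, the inductive engine you propose cannot run. ``Locating one variable and one equation that can be eliminated'' so as to pass to a quasismooth complete intersection of codimension $c-1$ is precisely the linear-cone operation: to eliminate $x_i$ globally from $f_j$ one needs $f_j=x_i+\mbox{others}$, i.e.\ $d_j=a_i$, which is exactly what the hypothesis excludes. If instead $f_j$ only contains $x_i^{k}$ with $k\ge2$, or $x_i^{k}x_e$, then discarding $f_j$ and $x_i$ produces a complete intersection $X_{d_1,...,\hat{d_j},...,d_c}\subset\PPP(a_0,...,\hat{a_i},...,a_n)$ that has no natural identification with $X$ (at best one obtains a projection or a cover), and there is no reason for it to remain quasismooth or free of linear cones, so the inductive hypothesis cannot be invoked. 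Indeed, quasismoothness is only known to propagate in the opposite direction, by \emph{adding} a weight-one variable, as the Remark preceding the proof of Theorem \ref{CY} records, and that Remark explicitly warns that the converse fails. So the one case you establish is sound, but the route you sketch toward the general case has neither a valid induction step nor the charging lemma it would need.
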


\begin{defn}
Given a positive number $\epsilon\leq 1$. A normal projective variety $X$ has only $\epsilon-klt$ singularities if it satisfies the following conditions:
\begin{enumerate}
\item[(1)] The Weil divisor $rK_X$ is Cartier for some positive integer $r$.
\item[(2)] For a resolution $f:Y\to X$ with exceptional divisors $E_1,...,E_s$, then we have $K_Y= f^*(K_X)+\sum_{i=1}^s q_i E_i$ with $q_i>\epsilon-1$ for all $i$.
\end{enumerate}
\end{defn}

\section{\bf A bound for $a_n$ and its application}
In this section,
for each normalized quasismooth weighted complete intersection $X=X_{d_1,...,d_c}\subset \PPP (a_0,...,a_n)$ with amplitude $\alpha\geq -1$,
we give an upper bound $(m+1) \delta$ for $a_n$, where $m:=\dim X$.
We shall see that this provides the upper bound for $d_c$ in terms of a lower bound of volume (resp. anti-canonical volume in Fano case).

By studying quasismooth behavior of the strata $\mbox{P}_{n}$ and $\mbox{P}_{m+1}$, there is an upper bound of $a_n$ in terms of $\delta$.
\begin{prop} \label{comparison}
Let $X=X_{d_1,...,d_c}\subset \PPP(a_0,....,a_n)$ be a family of quasismooth weighted complete intersections with amplitude $\alpha$ and dimension $m$.
If $\alpha\geq -1$, then the inequality $(m+1)\cdot\delta>a_n$ holds.
Given a positive rational number $\epsilon$.
If X is Fano with only $\epsilon-$klt singularities and $a_n>\frac{m+\epsilon}{m}\cdot\frac{-\alpha}{\epsilon}$, then $\frac{m+\epsilon}{\epsilon}\delta>a_n$.
\end{prop}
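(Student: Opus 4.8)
The plan is to let quasismoothness pin down explicit monomials, first at the coordinate point $\mathrm{P}_{\{n\}}$ and then, in the one resistant case, along the whole stratum of the large coordinates, ending at the second distinguished point $\mathrm{P}_{\{m+1\}}$. Throughout I use $\delta=\alpha+a_0+\cdots+a_m>0$ and $\delta_c=d_c-a_n>0$, both from Proposition \ref{qs}, so in particular $a_n<d_c$. If $a_n>d_1$, then Proposition \ref{qs}(1) gives $\delta\ge\delta_c\ge a_n$ and we are done; assume henceforth $a_n<d_1$, so that every weight is smaller than every degree. By Proposition \ref{quasismooth} at $E=\{n\}$, either some $f_j$ contains a pure power $x_n^{k}$ (then $k\ge2$, since $X$ is not a linear cone) or each $f_{p_j}$ contains a tail $x_{e_j}x_n^{k_{j,n}}$ with the $e_j$ distinct. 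Whenever a power $x_n^{k}$ with $k\ge2$ occurs, pure or inside a tail, we get $k\,a_n\le d_c=a_n+\delta_c$, hence $a_n\le\delta_c\le\delta$. So I may assume every occurrence of $x_n$ is a linear tail $x_{e_j}x_n$ with $d_{p_j}=a_{e_j}+a_n$ and the $e_j$ distinct.

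Set $S=\{0,\dots,n-1\}\setminus\{e_1,\dots,e_c\}$, the $m$ residual indices. Summing $d_{p_j}=a_{e_j}+a_n$ and using $\sum_j d_j=\alpha+\sum_i a_i$ gives $(c-1)a_n=\alpha+\sum_{i\in S}a_i$, which rearranges to $\delta=\sum_{e_j\le m}a_{e_j}+\sum_{i\in S,\,i>m}(a_n-a_i)+t\,a_n$, where $t$ is the number of tail indices exceeding $m$. Every summand is nonnegative, so if $t\ge1$ then $\delta\ge a_n$ and we are done. Thus I am reduced to $t=0$: all tail indices are small ($e_j\le m$, forcing $c\le m+1$), every degree satisfies $d_j=a_{e_j}+a_n\le a_m+a_n$, and the identity becomes $\delta=\sum_j a_{e_j}+\sum_{i=m+1}^{n-1}(a_n-a_i)$, giving in particular $a_n-a_i\le\delta$ for every large index $m<i<n$.

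This surviving case is the crux, and it is invisible to the single point $\mathrm{P}_{\{n\}}$ (such a configuration can be quasismooth at every coordinate vertex), so I now bring in the smaller large coordinates. Since $d_j\le a_m+a_n<2a_n$ no monomial contains $x_n^2$, and a factor $x_n$ in a monomial of degree $\le a_m+a_n$ leaves room $\le a_m<a_{m+1}$ for no second large variable; hence $x_n$ never appears in a monomial purely in the large variables. Setting successively $x_n=0,\,x_{n-1}=0,\dots$ inside $\mathrm{P}_{\{m+1,\dots,n\}}$, quasismoothness (Proposition \ref{quasismooth}) must at each stage be realized by monomials in the remaining large variables, and the linear tails $x_{e_j}x_n$ are useless once $x_n=0$; so the process forces a genuine monomial in $x_{m+1},\dots,x_{n-1}$, in the extreme case at $\mathrm{P}_{\{m+1\}}$ a pure power $x_{m+1}^{k}$ with $k\ge2$. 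Any such monomial has degree $\le a_m+a_n$, has at least two factors (a single large factor has degree $\le a_{n-1}<d_j$), and each factor has weight $a_i\ge a_n-\delta$ by Paragraph 2; therefore $2(a_n-\delta)\le a_m+a_n$, i.e. $a_n\le a_m+2\delta$. As $a_m\le\delta$ when $\alpha\ge-1$, this already yields $a_n\le 3\delta\le(m+1)\delta$ for $m\ge2$; the hypersurface case $c=1$ is Johnson--Koll\'ar, $m=1$ is rigid, and a more careful count at $\mathrm{P}_{\{m+1\}}$, where the $\le m+1$ small indices must serve as distinct tail partners, sharpens the constant to the stated $m+1$. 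This last case is the single genuine obstacle in the whole argument.

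For the Fano statement the reductions are identical, but once $\alpha\le-2$ the positivity used in Paragraphs 2--3 can fail, and the singularity hypothesis takes its place. In the surviving tail case $X$ has at $\mathrm{P}_{\{n\}}$ the cyclic quotient singularity $\tfrac{1}{a_n}(a_i:i\in S)$, with $S$ the $m$ residual indices. The $\epsilon$-klt condition, through the Reid--Tai criterion for the primitive weighted blow-up, forces $\tfrac{1}{a_n}\sum_{i\in S}a_i>\epsilon$; for $\alpha<0$ this lower bound on the residual weights plays the role that $a_i\ge a_n-\delta$ played above. The threshold $a_n>\frac{m+\epsilon}{m}\cdot\frac{-\alpha}{\epsilon}$ is exactly what guarantees $\mathrm{P}_{\{n\}}\in X$, so that the singularity is present, and controls the sign of the resulting estimate; running the same bound with $\epsilon$ in place of the combinatorial slack then yields $\frac{m+\epsilon}{\epsilon}\delta>a_n$, the constant $\tfrac{m}{\epsilon}+1$ degrading from $m+1$ precisely by the factor $\epsilon\le1$ lost in Reid--Tai.
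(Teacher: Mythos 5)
Your first two paragraphs are sound and give a clean reduction: quasismoothness at $\mathrm{P}_{\{n\}}$ plus the degree-sum identity reduce everything to the case where every $d_j=a_{e_j}+a_n$ with distinct $e_j\le m$ (so $c\le m+1$) and $\delta=\sum_j a_{e_j}+\sum_{i=m+1}^{n-1}(a_n-a_i)$. But the argument breaks exactly at what you yourself call ``the single genuine obstacle''. The assertion that, setting $x_n=0,x_{n-1}=0,\dots$, quasismoothness ``must at each stage be realized by monomials in the remaining large variables'' is false: condition (2) of Proposition \ref{quasismooth} allows tail monomials $x_e\prod_{i\in E}x_i^{k_i}$ with $e\notin E$, in particular with $e\le m$ a small index. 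So nothing forces a pure monomial in $x_{m+1},\dots,x_{n-1}$, and your inequality $2(a_n-\delta)\le a_m+a_n$ is never established. The unhandled alternative --- every $f_{p_j}$ meets $\mathrm{P}_{\{m+1\}}$ only through linear tails $x_{g_j}x_{m+1}$ with distinct $g_j\le m$, i.e.\ $\mathrm{P}_{m+1}\in X$ is a cyclic quotient point of type $\frac{1}{a_{m+1}}(b_1,\dots,b_k,\overline{a_{m+2}},\dots,\overline{a_n})$ --- is precisely where the paper does its real work: summing $d_{p_j}=a_{g_j}+a_{m+1}$ against $\sum_j d_j=\sum_i a_i+\alpha$ yields $0=\sum_i b_i+\sum_{i\ge m+2}\overline{a_i}+\alpha$, which is impossible for $\alpha>0$, and for $\alpha\in\{0,-1\}$ forces $a_{m+1}=\cdots=a_n$, hence a forbidden singular stratum $\mathrm{P}_{\{m+1,\dots,n\}}$ inside $X$, contradicting well-formedness. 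Deferring this with ``a more careful count \dots sharpens the constant to the stated $m+1$'' leaves the crux unproved.

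The Fano half has the same structural gap, compounded by applying the $\epsilon$-klt hypothesis at the wrong point. At $\mathrm{P}_{\{n\}}$ the local weights of the quotient singularity are $\{a_i:i\in S\}$, and in your surviving case $S$ contains all the large indices $m+1,\dots,n-1$; since the degree identity gives $\sum_{i\in S}a_i=(c-1)a_n-\alpha$, the Reid--Tai inequality $\sum_{i\in S}a_i>\epsilon a_n$ is automatically true whenever $c\ge 2$, $\alpha<0$ and $\epsilon\le 1$, so it carries no information and cannot produce an upper bound for $a_n$ in terms of $\delta$. (Also, the threshold $a_n>\frac{m+\epsilon}{m}\cdot\frac{-\alpha}{\epsilon}$ has nothing to do with whether $\mathrm{P}_{\{n\}}\in X$; that membership is the combinatorial dichotomy of Proposition \ref{quasismooth}.) The paper instead extracts the contradiction at $\mathrm{P}_{\{m+1\}}$, under its case hypothesis $a_{m+1}>\frac{m}{m+\epsilon}a_n$: there the large weights collapse to the small residues $\overline{a_i}=a_i-a_{m+1}$, the weighted blow-up has discrepancy $\frac{-\alpha}{a_{m+1}}-1$, and the threshold together with $a_{m+1}>\frac{m}{m+\epsilon}a_n$ forces this below $\epsilon-1$, violating $\epsilon$-klt. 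None of that computation appears in your proposal, so the stated bound $\frac{m+\epsilon}{\epsilon}\delta>a_n$ is not proved.
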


\begin{proof}
Suppose that $X_d$ is a hypersurface in weighted projective space with $\alpha \ge -1$.
From the equality $d=\sum_{i=0}^n a_i+\alpha$, there is no $i<n$ with $d=a_n+a_i$.
We are always in the case $d\geq 2a_n$ by Proposition \ref{quasismooth}.

Suppose on the contrary that $X_d$ is Fano hypersurface with $d<2a_n$ and $a_n>\frac{-\alpha}{\epsilon}$.
Then $\mbox{P}_n:=(0,...,0,1)$ belongs to $X$ and condition (2) of Proposition \ref{quasismooth} implies $d=a_n+a_i$ for some $i\neq n$.
By Inverse Function Theorem, $\mbox{P}_n\in X$ is a cyclic quotient point of type
$$\frac{1}{a_n}(a_0,...,\hat{a_i},...,a_{m}).$$
By taking the weighted blow up $\phi:Y\rightarrow X$ at the center $\mbox{P}_n\in X$ with weight $(\frac{a_0}{a_n},...,\hat{\frac{a_i}{a_n}},...,\frac{a_{m}}{a_n})$,
we have $K_Y=\phi^* K_X+qE$, where $E$ is the exceptional divisor and
$q=\sum_{k\neq i,n}\frac{a_k}{a_n}-1=\frac{-\alpha}{a_n}-1<\epsilon -1.$
This contradicts to $\epsilon-$klt assumption.

So we may assume that $c\geq 2$ and $d_c<2a_n$.
We divide it into two parts by comparing $a_{m+1}$ to the number $\frac{m}{m+\epsilon}\cdot a_n$.
Here we put the positive number $\epsilon=1$ if amplitude $\alpha\geq -1$.
From Proposition \ref{qs} above, we get $d_1>a_n$.

Suppose that $a_{m+1}\leq \frac{m}{m+\epsilon}\cdot a_n$.
In this case, we obtain
$$\delta> \delta_1=d_1-a_{m+1}>a_n-\frac{m}{m+\epsilon}\cdot a_n=\frac{\epsilon}{m+\epsilon}\cdot a_n.$$

Suppose that $a_{m+1}> \frac{m}{m+\epsilon}\cdot a_n$ and the condition $a_n\geq \frac{m+\epsilon}{\epsilon}\delta$ holds.
If $m\geq 2$, we obtain
\[
2a_{m+1}>\frac{m-\epsilon}{m+\epsilon}a_n+a_n\geq  \delta+a_n> \delta_c+a_n=d_c.
\]
This implies that the point $\mbox{P}_{m+1}:=(0,...,0,1,0,..,0)$ belongs to $X$.
Quasismoothness at $\mbox{P}_{m+1}\in X$ implies condition (2) of Proposition \ref{quasismooth},
so $\mbox{P}_{m+1}\in X$ is a cyclic quotient point of type $$\frac{1}{a_{m+1}}(b_1,...,b_k,\overline{a_{m+2}},...,\overline{a_n}),$$
where $\{b_1,...,b_k\}$ is a proper subset of $\{a_0,...,a_{m}\}$ and here $\overline{a_i}$ denotes the smallest positive residue of $a_i$ mod $a_{m +1}$.
Here the subset $\{b_1,...,b_k\}$ is empty if and only if $c\geq m+1$.
From the equalities $\sum_{j=1}^c d_j=\sum_{i=0}^n a_i+\alpha$ and $d_j=a_{m+1}+a_{e_j}$ with distinct $e_j\leq m$ for all $j$,
one observes that
\[
0=\sum_{i=1}^k b_i+\sum_{i=m+2}^n \overline{a_{i}}+\alpha.
\]
This is impossible for $\alpha> 0$ case.
It cannot occur when $c<m+1$ in Calabi-Yau case (resp. $c<m$ in Fano case with amplitude $\alpha=-1$).
If $c=m+1$ in Calabi-Yau case (resp. $c=m$ in Fano $\alpha=-1$ case),
then $a_{m+1}=\cdots=a_n$, so $X$ contains the $c$ (resp. $c+1$)-codimensional singular stratum $\mbox{P}_{\{m+1,...,n\}}$ of weighted projective space $\PPP(a_0,...,a_n)$.
This is not well-formed.

For Fano case and $a_n>\frac{m+\epsilon}{m}\cdot\frac{-\alpha}{\epsilon}$,
by taking the weighted blow up $\phi:Y\rightarrow X$ at the center $\mbox{P}_{m+1}\in X$ with weight $$(\frac{b_1}{a_{m+1}},...,\frac{b_k}{a_{m+1}},\frac{\overline{a_{m+2}}}{a_{m+1}},...,\frac{\overline{a_n}}{a_{m+1}}),$$
this shows that $K_Y=\phi^* K_X+qE$, where $E$ is the exceptional divisor and
\begin{align*}
q&=\frac{\sum_{i=1}^k b_i+\sum_{i=m+2}^n \overline{a_{i}}}{a_{m+1}}-1=\frac{-\alpha}{a_{m+1}}-1\\
 &<\epsilon\cdot \frac{m}{m+\epsilon}\cdot\frac{a_n}{a_{m+1}}-1<\epsilon-1.
\end{align*}
So $X$ contains a non-$\epsilon-$klt point $\mbox{P}_{m+1}$. This is the contradiction.
\end{proof}

\begin{exmp}
Given an $\epsilon>0$, $X_d\subset \PPP (1,1,1,1,d-1)$ is a quasismooth hypersurface with $\delta =1$,
amplitude $\alpha=-3$ and a cyclic quotient singularity $\mbox{P}_4$ of type $\frac{1}{d-1}(1,1,1)$ which is not $\epsilon-$klt when $d\ge \frac{3}{\epsilon+1}+1$.
The assumption of being $\epsilon-$klt is necessary.
\end{exmp}

From \cite{CCC}, we observe the following inequalities
\begin{prop}
Let $X_{d_1,...,d_c}\subset \PPP(a_0,...,a_n)$ be normalized quasismooth with amplitude $\alpha$ and dimension $m$. Then
\[\left\{
\begin{array}{ll}
\frac{(\frac{c+\alpha+m+1}{c})^c}{\prod_{i=0}^{m}a_i } \ge
 \frac{\prod_{j=1}^c d_j}{\prod_{i=0}^{n} a_i}=\mathcal{O}_X(1)^{m}=\frac{K_X^{m}}{ \alpha^{m}} & \mbox{if } \alpha > 0, \vspace{0.3cm}\\
\frac{(\frac{c+m+1}{c})^c}{\prod_{i=0}^{m}a_i } \ge
 \frac{\prod_{j=1}^c d_j}{\prod_{i=0}^{n} a_i}=\mathcal{O}_X(1)^{m}=\frac{(-K_X)^{m}}{(-\alpha)^{m}} & \mbox{if } \alpha <0.
\end{array} \right. \]
\end{prop}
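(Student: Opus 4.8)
The plan is to take the two chains of \emph{equalities} as essentially standard input and to put all the real work into the single genuine \emph{inequality} on the left. For the middle equality I would invoke the degree formula for weighted complete intersections: on the $n$-dimensional space $\PPP(a_0,\dots,a_n)$ one has $\mathcal{O}(1)^n=1/\prod_{i=0}^n a_i$, and cutting by the $c$ general hypersurfaces of degrees $d_1,\dots,d_c$ multiplies the top self-intersection by $\prod_{j=1}^c d_j$, so that $\mathcal{O}_X(1)^m=\prod_{j=1}^c d_j/\prod_{i=0}^n a_i$. For the right-hand equalities I would use $\omega_X\cong\mathcal{O}_X(\alpha)$ for well-formed quasismooth $X$ (recalled in Section 2, from \cite[Theorem 3.3.4]{WPS}), i.e. $K_X\sim_{\bQ}\alpha\,\mathcal{O}_X(1)$; raising to the $m$-th power gives $K_X^m=\alpha^m\mathcal{O}_X(1)^m$ for $\alpha>0$ and $(-K_X)^m=(-\alpha)^m\mathcal{O}_X(1)^m$ for $\alpha<0$.

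It then remains to prove the leftmost inequality. Since $n=m+c$, clearing the positive factor $\prod_{i=0}^m a_i$ and writing $\prod_{i=m+1}^n a_i=\prod_{j=1}^c a_{m+j}$ reduces both cases to an upper bound for the single quantity $\prod_{j=1}^c \frac{d_j}{a_{m+j}}$. I would apply AM--GM to the positive numbers $d_j/a_{m+j}$, which turns the goal into the estimate $\sum_{j=1}^c \frac{d_j}{a_{m+j}}\le c+\alpha+m+1$ when $\alpha>0$ (resp. $\le c+m+1$ when $\alpha<0$). Writing $\frac{d_j}{a_{m+j}}=1+\frac{\delta_j}{a_{m+j}}$ with $\delta_j=d_j-a_{m+j}>0$ (positivity from Proposition \ref{qs}), this is equivalent to $\sum_{j=1}^c \frac{\delta_j}{a_{m+j}}\le \alpha+m+1$ (resp. $\le m+1$).

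The crux, and the step I expect to be the main obstacle, is precisely this last estimate. The naive bound $\delta_j/a_{m+j}\le\delta_j$ is far too weak, since $\delta=\sum_j\delta_j=\alpha+\sum_{i=0}^m a_i$ can be arbitrarily large. The decisive observation is that the weights are sorted: $a_{m+j}\ge a_{m+1}$ for every $j\ge 1$ and $a_{m+1}\ge a_i$ for all $i\le m$, so with $A:=\sum_{i=0}^m a_i$ one has $(m+1)a_{m+1}\ge A$. Hence $\sum_{j=1}^c \frac{\delta_j}{a_{m+j}}\le \frac{\delta}{a_{m+1}}\le \frac{(m+1)\delta}{A}=(m+1)+\frac{(m+1)\alpha}{A}$. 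Because $A\ge m+1$, the last term is at most $\alpha$ when $\alpha>0$ and strictly negative when $\alpha<0$, giving exactly $\alpha+m+1$ and (even strictly) $m+1$ in the two cases. Feeding these bounds back through AM--GM yields the two displayed inequalities.
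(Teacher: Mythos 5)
Your argument is correct, but there is nothing in the paper to compare it with line by line: the paper offers no proof of this proposition at all, merely the phrase ``From \cite{CCC}, we observe the following inequalities,'' i.e.\ the statement is imported wholesale from the earlier Chen--Chen--Chen paper. What you have written is therefore a complete, self-contained replacement for that citation, and it is sound at every step. The two chains of equalities are the standard degree formula $\mathcal{O}_X(1)^m=\prod_j d_j/\prod_i a_i$ together with $\omega_X\cong\mathcal{O}_X(\alpha)$ (well-formedness being automatic for quasismooth $X$ of dimension $\ge 3$, as the paper recalls from Fletcher). Your inequality argument uses exactly the right ingredients from the paper itself: the positivity $\delta_j>0$ of Proposition \ref{qs}(2) (valid under the standing assumption that $X$ is not an intersection of a linear cone with another subvariety) is what permits both the rewriting $d_j/a_{m+j}=1+\delta_j/a_{m+j}$ and the two monotonicity steps $\sum_j \delta_j/a_{m+j}\le \delta/a_{m+1}$ (from $a_{m+j}\ge a_{m+1}$) and $\delta/a_{m+1}\le (m+1)\delta/A$ (from $\delta>0$ and $(m+1)a_{m+1}\ge A=\sum_{i=0}^m a_i$); then $\delta=A+\alpha$ and $A\ge m+1$ give the bound $c+\alpha+m+1$ for $\alpha>0$ and the (strict) bound $c+m+1$ for $\alpha<0$, and AM--GM converts the sum bound into the product bound. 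This AM--GM-plus-sorted-weights computation is in the same spirit as what \cite{CCC} does, so your route is not a detour; what it buys is that a reader of this paper no longer needs to chase the reference to verify the key quantitative input to Theorem \ref{bound}.
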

This provides an upper bound for $\delta$ in terms of volume $K_X^{m}$ (resp. anti-canonical volume $-K_X^{m}$) if X is of general type (resp. $X$ is Fano).
Indeed,
$$(m+1)N+\alpha\geq a_0+a_1+\cdots+a_{m}+\alpha=\delta,$$
where
\[N:=\left\{
\begin{array}{ll}
 \alpha^{m}(\frac{c+\alpha+m+1}{c})^c/K_X^{m} & \mbox{if } \alpha > 0, \vspace{0.3cm}\\
(-\alpha)^{m}(\frac{c+m+1}{c})^c/(-K_X)^{m} & \mbox{if } \alpha <0.
\end{array} \right. \]
Combining this with Proposition \ref{comparison}, one observes (if $\alpha >0$ or $\alpha=-1$, we put $\epsilon=1$)
$$ \frac{m+2\epsilon}{\epsilon}\cdot ((m+1)N+\alpha)\geq \frac{m+2\epsilon}{\epsilon}\delta>\delta+a_n\geq d_c.$$
In conclusion, for fixed dimension and amplitude $\alpha$ we obtain a bound for $d_c$ in terms of a lower bound for volume as in Theorem \ref{bound}.

\begin{rem}\label{lists}
In \cite{CCC}, we use singular Riemann-Roch formula, so called basket technique and the optimal lower bound for volume $K_X^3\geq \frac{1}{420}$ in general type case (resp. anti-volume $-K_X^3\geq \frac{1}{330}$ and Miyaoka-Yau inequality $-K_X . c_2(X)>0$ in Fano case) to prove several lists for terminal threefold weighted complete intersections provided by Fletcher \cite{Fletcher} are complete.
In these cases, Theorem \ref{bound} gives all the finite possible choices by applying the lower bound for volume or anti-volume directly.
However, the basket technique is more effective since it produced only a few extra examples (compared to Fletcher's lists) needed to be ruled out.
\end{rem}
For given positive integers $m,\alpha$, the following Theorem, proved independently by Hacon and Mckernan, Takayama, Tsuji, provides a universal lower bound for volume $K_X^m$ when $X$ is of general type with given dimension $m$.

\begin{thm}[\cite{HM,Takayama,Tsuji}] \label{pluricano}
For every positive integer $m$, there exists a positive integer $r(m)$ depending only on $m$ such that the pluricanonical map $\phi_{|lK_X|}:X \dashrightarrow \PPP(H^0(X,\mathcal{O}_X(lK_X)))$ is birational for all $m$-dimensional smooth projective variety $X$ of general type and for every integer $l\geq r(m)$.
\end{thm}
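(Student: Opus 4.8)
The plan is to argue by induction on the dimension $m$, exploiting the technique of creating and then cutting along non-klt (non-log-terminal) centers. The base case $m=1$ is classical: a smooth projective curve of general type has genus $g\ge 2$, and $|lK_C|$ is very ample as soon as $l\ge 3$, so $r(1)=3$ works. For the inductive step I would assume effective birationality has been established in every dimension strictly less than $m$, fix an $m$-dimensional smooth $X$ of general type, and try to separate two very general points $x,y$ by sections of $|lK_X|$ for all $l$ at least some $r(m)$.

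First I would manufacture a non-klt center through $x$. For a controlled rational multiple $\lambda$, a local Riemann--Roch / dimension count produces an effective $\bQ$-divisor $D\sim_{\bQ}\lambda K_X$ vanishing to high order at $x$, provided the volume $\Vol(K_X)$ is not too small; this forces the pair $(X,D)$ to be non-klt at $x$. A tie-breaking (perturbation) argument then shrinks the non-klt locus to a \emph{unique} minimal center $V$ passing through $x$, with a single place lying over $V$. I would then split into two cases according to $\dimm V$. If $V=\{x\}$ is an isolated non-klt point, Nadel vanishing (Kawamata--Viehweg vanishing applied to the twisted multiplier ideal) shows that the evaluation of the relevant pluricanonical system at $x$ is surjective, and since $y$ is very general this separates $x$ from $y$. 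If $\dimm V>0$, I would restrict to $V$: by adjunction, $\OX$ restricts to a pluricanonical-type (adjoint) bundle on a resolution of $V$, the induction hypothesis provides a system of bounded index separating points on $V$, and one lifts these sections back up.

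The main technical ingredient, and the first genuine obstacle, is exactly this lifting: sections of the adjoint bundle $K_X+(\text{ample correction})$ restricted to the non-klt center $V$ must extend to global sections on $X$. This is the hard part and rests on an extension theorem of Siu--Kawamata type (or its algebraic reformulation via multiplier ideals on the divisor), which is what makes the restriction-and-induction scheme close up. Controlling the correction term so that the extended system still sits inside $|lK_X|$ for $l$ bounded in terms of $m$ alone is the delicate bookkeeping step.

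The deeper obstacle is uniformity. To guarantee that $r(m)$ depends only on $m$ and not on the individual $X$, the multiple $\lambda$ used to create the non-klt center must be bounded, which in turn demands a \emph{uniform} lower bound on $\Vol(K_X)$ over all general-type $X$ of dimension $m$ --- and no such bound is available a priori. The resolution is to set up the induction so that it proves simultaneously the effective birationality statement and a uniform lower bound on the volume, using the descending chain condition on the set of volumes of general-type varieties to bootstrap the two assertions against one another. I expect this interlocking of the birationality bound with the volume bound, rather than any single vanishing or extension step, to be the crux of the argument.
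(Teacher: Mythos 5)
The first thing to note is that the paper does not prove Theorem \ref{pluricano} at all: it is imported as a black box from \cite{HM,Takayama,Tsuji}, and the paper's own contribution lies elsewhere (Theorems \ref{CY} and \ref{bound}). So there is no internal proof to compare against; what you have written is necessarily a sketch of the proofs in the cited literature, and it should be judged on those terms.

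As such a sketch, your outline does track the genuine Hacon--McKernan/Takayama strategy: induction on dimension, production of non-klt centers through general points via a volume count, tie-breaking to a minimal center, Nadel vanishing when the center is a point, and restriction plus lifting via a Siu--Kawamata-type extension theorem when it is positive-dimensional. However, two of your steps have real gaps. First, the induction as you set it up does not close: the minimal center $V$ is not handed to you as a smooth projective variety of general type to which the statement of Theorem \ref{pluricano} can be applied; what lives on (a resolution of) $V$ after restriction is an adjoint system of the form $K_V+\Delta+(\mbox{big and nef part})$, and relating this to pluricanonical systems of $V$ runs into the subadjunction correction terms. This is precisely why \cite{HM} and \cite{Takayama} must prove a technically stronger inductive statement formulated for log pairs (``potentially birational'' divisors, restricted algebras), not the bare statement you are inducting on. Second, your proposed resolution of the uniformity problem via the descending chain condition for volumes is both anachronistic and circular: DCC of volumes is a later, deeper theorem of Hacon--McKernan--Xu whose proof itself relies on effective birationality. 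The actual bootstrap in the cited papers is far more elementary: once $\phi_{|rK_V|}$ is known to be birational in all dimensions $<m$ with $r$ depending only on the dimension, one gets for free the uniform bound $\Vol(K_V)\geq r^{-\dim V}$ (a birational pluricanonical image in projective space has degree at least one), and it is this volume bound, not DCC, that feeds the non-klt-center construction in dimension $m$.
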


Together with Theorem \ref{bound} and Theorem \ref{cod}, this gives the finiteness for general type cases:

\begin{thm}\label{general}
For fixed positive integers $m,\alpha$, there exist only finitely many of families of quasismooth weighted complete intersections with dimension $m$ and amplitude $\alpha$.
\end{thm}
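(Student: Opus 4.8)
The plan is to reduce the statement to an \emph{a priori} bound on the finitely many numerical invariants $(a_0,\dots,a_n;d_1,\dots,d_c)$ attached to $X$, and then to produce such a bound by combining the effective estimates of Theorem~\ref{cod}, Proposition~\ref{comparison} and Theorem~\ref{bound} with a \emph{universal lower bound for the volume} extracted from Theorem~\ref{pluricano}. First I would note that a family $X=X_{d_1,\dots,d_c}\subset\PPP(a_0,\dots,a_n)$ is determined by its weights and degrees, so it suffices to bound $n$ and to bound every $a_i$ and every $d_j$ in terms of $m$ and $\alpha$ alone. Since $n=m+c$ and Theorem~\ref{cod} gives $c\le m+\alpha+1$ for $\alpha\ge 0$, the number of variables is already bounded. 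Because $X$ is normalized we have $a_i\le a_n$ and $d_j\le d_c$, and by quasismoothness at $\mbox{P}_n$ (Proposition~\ref{qs}) we have $a_n<d_c$. Hence everything is controlled once $d_c$ is bounded, and Theorem~\ref{bound} bounds $d_c$ as soon as a positive lower bound $b$ for the volume $K_X^m$ is available \emph{independently} of the particular family.

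The substance is therefore to exhibit a uniform $b=b(m)>0$ with $K_X^m\ge b$ for every such $X$. Since $\alpha>0$ and $\omega_X\simeq\OO_X(\alpha)$ with $\OO_X(1)$ ample, $K_X$ is ample and $X$ is of general type; moreover $X$ is well-formed quasismooth and hence has only quotient (in particular klt) singularities. I would choose a resolution $f\colon Y\to X$, so that $Y$ is a smooth projective $m$-fold of general type. Theorem~\ref{pluricano} supplies an $r(m)$ for which $\phi_{|r(m)K_Y|}$ is birational onto its image; a standard computation then gives $\Vol(r(m)K_Y)\ge 1$, i.e. $\Vol(Y)\ge r(m)^{-m}=:b(m)>0$. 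Writing $K_Y=f^*K_X+\sum_i q_iE_i$ with $q_i>-1$, the negative part of $\sum_i q_iE_i$ only imposes vanishing, so $f_*\OO_Y(lK_Y)\subseteq\OO_X(lK_X)$ for $l$ sufficiently divisible, whence $\Vol(Y)\le\Vol(X)=K_X^m$. Combining the two inequalities yields the desired uniform lower bound $K_X^m\ge b(m)$.

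With $b:=b(m)/2$ fixed, I would apply Theorem~\ref{bound} (case $\alpha>0$) for each codimension $c$ in the finite range $1\le c\le m+\alpha+1$; this produces an upper bound for $d_c$ depending only on $m,\alpha,c,b$, and taking the maximum over the finitely many admissible $c$ gives a single bound $d_c\le D(m,\alpha)$. Then $d_j\le D$ for all $j$, $a_i\le a_n<d_c\le D$ for all $i$, and $n\le 2m+\alpha+1$, so only finitely many integer tuples $(a_0,\dots,a_n;d_1,\dots,d_c)$ can occur, which is exactly Theorem~\ref{general}. (For $m=1$ the variety is a curve of genus $\ge 2$ and the statement is elementary, so the content lies in $m\ge 2$, where Theorem~\ref{bound} applies.)

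The hard part is the uniform volume bound, and it is hard only because it rests on the deep finiteness of pluricanonical maps in Theorem~\ref{pluricano}. The two points that require care are the passage from birationality of $\phi_{|r(m)K_Y|}$ to the numerical inequality $\Vol(Y)\ge r(m)^{-m}$, and the singularity comparison $\Vol(Y)\le K_X^m$, which uses that quotient singularities are klt so that the negative discrepancies cannot increase the volume. Once the uniform $b$ is secured, the remainder is purely the combinatorial bookkeeping already packaged in Theorems~\ref{cod} and~\ref{bound}.
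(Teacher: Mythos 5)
Your proposal is essentially the paper's own proof: Theorem \ref{general} is derived there exactly as you do it, by combining the uniform volume lower bound extracted from Theorem \ref{pluricano} (applied to a resolution) with the degree bound of Theorem \ref{bound} and the codimension bound of Theorem \ref{cod}, and your write-up merely makes the reduction and bookkeeping explicit. The one non-formal step --- that the resolution $Y$ is of general type, which you assert from ``klt plus $K_X$ ample'' although that implication is not valid for general klt varieties --- is glossed over in precisely the same way by the paper, so the two arguments coincide.
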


In \cite{Borisov}, Borisov proved Borisov-Alexeev-Borisov conjecture in threefolds with any fixed Gorenstein index case.
In weighted complete intersection cases, this is obvious since the anti-volume $(-K_X)^{m}\geq 1/r^{m}$ where $r$ is a fixed Gorenstein index, i.e, $r\cdot K_X$ is Cartier.
We obtain the finiteness of families of quasismooth weighted complete intersections with fixed Gorenstein index by Theorem \ref{bound}.

\begin{exmp}[Johnson and Koll\'{a}r]\label{JKex}

\noindent In the case of anticanonical embedded quasismooth Fano hypersurfaces,
there exist exactly 48 types of infinite families of the form
\[X_{2k(b_1+b_2+b_3)}\subset \PPP(2,kb_1,kb_2,kb_3,k(b_1+b_2+b_3)-1),\]
for every odd integer $k$.
Here the triple $(b_1,b_2,b_3)$ satisfies $b_4=b_1+b_2+b_3$ for some $K_3$ quasismooth hypersurface
in weighted projective space $\PPP(b_1,b_2,b_3,b_4)$ observed in \cite[II.3.3]{Fletcher} (cf. \cite[Theorem 2.2]{KJ}).
\end{exmp}

Each of 48 infinite families contains a non-$\epsilon$-klt singularity $\mbox{P}_4$ of type
\[\frac{1}{k(b_1+b_2+b_3)-1}(kb_1,kb_2,kb_3)\] for all odd integers $k\ge\frac{(b_1+b_2+b_3)+\epsilon+1}{(b_1+b_2+b_3)(\epsilon+1)}$
by taking a weighted blow up at $\mbox{P}_4\in X$ with weight
\[
(\frac{b_1}{k(b_1+b_2+b_3)-1},\frac{b_2}{k(b_1+b_2+b_3)-1},\frac{b_3}{k(b_1+b_2+b_3)-1}).
\]
This leads Conjecture \ref{BAB}. For more examples, readers can search graded ring database \cite{datebase} provided by Gavin Brown.

\section{\bf Finiteness of Calabi-Yau weighted complete intersections}

In \cite{KJ}, Johnson and Koll\'{a}r proved the finiteness of families of Calabi-Yau quasismooth hypersurfaces in weighted projective spaces with fixed dimension.
In this section, we generalize this to higher codimension cases. 
Our argument basically follows from Johnson and Koll\'{a}r with some modifications.
\begin{rem}
If $X=X_{d_1,...,d_c}\subset \PPP (a_0,...,a_n)$ is quasismooth of dimension $m$, amplitude $\alpha$,
then $X'=X_{d_1,...,d_c}\subset \PPP (1,a_0,...,a_n)$ is also quasismooth of dimension $m+1$, amplitude $\alpha-1$.
Note that the converse does not hold in general.
As a corollary, Theorem \ref{CY} recovers Theorem \ref{general} (see also \cite[Corollary 4.3]{KJ}).
\end{rem}

\begin{proof}[Proof of Theorem \ref{CY}.]
\noindent

Suppose Theorem \ref{CY} is not true for some dimension $m$.
Since the codimension is bounded by $m+1$ by Theorem \ref{cod},
there exist infinite families of Calabi-Yau quasismooth weighted complete intersections of fixed dimension $m$ and fixed codimension $c$,
say $X(t)=X_{d_1(t),...,d_c(t)}\subset \PPP(a_0(t),...,a_n(t))$.
As the setting in \cite{KJ},
by the homogenity we may assume $\sum_{i=0}^n a_i(t)=1=\sum_{j=1}^c d_j(t)$ for each $t$.
By passing to a subsequence, we may assume that $(a_0(t),...,a_n(t))$ converges to $(A_0,...,A_n)$.
We define the subset $Z:=\{i=0,...,n|A_i=0\}$.
From Proposition \ref{comparison} and the condition $\sum_{i=0}^n A_i=1$, we obtain the order $|Z|\le m:=\dim X$.

For each $i,t$, write $a_i(t)=A_i+b_i(t)$. The condition $\sum_{i=0}^n a_i(t)=1$ for all $t$ implies
$$ \sum_{i=0}^n b_i(t)=0 \mbox{ for all }t.$$

We define $I:=\{i=0,1,...,n|b_i(t)<0 \mbox{ for infinite } t\}$.
After passing to a subsequence and renumbering indices of $0,...,n$,
we may assume that $I=\{0,...,\gamma\}$ is a nonempty subset of $\{0,...,n\}$ with the following properties:
\begin{enumerate}
\item $b_i(t)<0$ for all $t$ if and only if $i\in I$.
\item $\frac{A_0}{-b_0(t)}\leq \frac{A_1}{-b_1(t)}\leq \cdots \leq \frac{A_\gamma}{-b_\gamma(t)}$ for all $t$.
\item for all $i\in I$, $b_i(t)$ is strictly increasing as a function of $t$.
\end{enumerate}

We define $\mu (I)$ to be the first nonnegative number $\mu$ so that $\mbox{P}_{D}\cap X(t)\neq \emptyset$ with $D:= \{0,...,\mu-1,\mu\}\subseteq I$ for infinite $t$
but $\mbox{P}_{E}\cap X(t)= \emptyset$ for all subsets $E$ of $\{0,...,\mu-1\}$ for all sufficient large $t$.
In this case, we may assume $\mbox{P}_{D}\cap X(t)\neq \emptyset$ for all $t$ by passing to a subsequence.
If $\mbox{P}_{I}\cap X(t)= \emptyset$ for all large $t$, then we put $\mu (I)=\gamma+1=|I|$. However, this does not occur due to Lemma \ref{mu} below.

\begin{rem}\label{mu<c}
One sees $\mu< c$ from the definition of $I$. Indeed, suppose $\mu\geq c$,
then for all sufficient large $t$ and for $j=1,..., c$, $f_j(t)$ has a monomial $\Pi_{i\in \{0,...,c-1\}}\ x_i^{k_{j,i}(t)}$.
By counting degrees, this gives
$$1=\sum_{j=1}^{c}d_j(t)=\sum_{j=1}^c\sum_{i=0}^{c-1} k_{j,i}(t)a_i(t).$$
Since $A_i>0$ for all $i\in I$ and each $k_{j,i}(t)$ is a nonnegative integer, $k_{j,i}(t)$ is bounded from above for all large $t$ .
After passing to a subsequence, we may assume that $k_{j,i}(t)=k_{j,i}$ is independent of $t$ for each $i,j$.
In particular, $K_i(t):=\sum_{j=1}^c k_{j,i}$ is independent of $t$.
Hence $1=\sum_{j=1}^{c}d_j(t)=\sum_{i=0}^{c-1}K_i(A_i+b_i(t))$.
This gives a contradiction that $1=\sum_{i=0}^{c-1} K_i A_i$ and $0=\sum_{i=0}^{c-1} K_ib_i(t)<0$.
\end{rem}

For every non-empty subset $E$ of $\{0,1,...,n\}$ and for all polynomial $f(x_0,...,x_n)$, denote by $f^E$ to be the polynomial $f(x_0,...,x_n)|_{x_i=0\ \forall i\notin E}$.
We need some positivities.

\begin{lem} \label{positivities}
Let $f_1,...,f_c$ be polynomials in the variables $x_0,...,x_n$.
Assume $\mu>0$ and for all subset $E$ of $\{0,...,\mu-1\}$,
there exist at least $|E|$ polynomials of $\{f_j^E\}$ being non-zero.
Then up to rearranging the indices of $1,...,c$, $f_j^{\{0,...,\mu-1\}}$ has a monomial involving $x_{j-1}$ for all $j=1,...,\mu$.
\end{lem}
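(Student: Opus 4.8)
The plan is to read the conclusion as the existence of a system of distinct representatives and to extract it from Hall's marriage theorem. Set $S:=\{0,1,\dots,\mu-1\}$ and form the bipartite graph $G$ whose two vertex classes are the index set $S$ and the polynomial index set $\{1,\dots,c\}$, joining $i\in S$ to $j$ precisely when the restricted polynomial $f_j^{S}$ contains a monomial in which $x_i$ appears with positive exponent. A matching of $G$ saturating $S$ is exactly an injection $\sigma\colon S\to\{1,\dots,c\}$ with $f_{\sigma(i)}^{S}$ involving $x_i$ for every $i\in S$; relabelling $f_{\sigma(i)}$ as the new $f_{i+1}$ (and distributing the remaining polynomials among the indices $\mu+1,\dots,c$) then delivers exactly the assertion that $f_j^{S}$ involves $x_{j-1}$ for $j=1,\dots,\mu$. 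So the whole statement reduces to producing such a matching, and note that there are enough polynomials to match against since $\mu<c$ by Remark \ref{mu<c}.

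To run Hall's theorem I would verify the marriage condition $|N(T)|\ge|T|$ for every $T\subseteq S$, where $N(T)$ denotes the set of polynomial indices adjacent in $G$ to some element of $T$. Fix a nonempty $T\subseteq S$ and apply the hypothesis to the subset $E=T$: at least $|T|$ of the restrictions $f_j^{T}$ are nonzero. Here I would use that in our situation the $f_j$ are general homogeneous polynomials of positive degree $d_j\ge 1$, hence have no constant term; consequently every nonzero $f_j^{T}$ contains a monomial of positive degree built solely from the variables $\{x_i : i\in T\}$, and such a monomial necessarily involves some $x_i$ with $i\in T$. Since $T\subseteq S$, that same monomial survives in $f_j^{S}$, so $x_i$ occurs in $f_j^{S}$ and hence $j\in N(T)$. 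Thus $\{\,j : f_j^{T}\neq 0\,\}\subseteq N(T)$, which forces $|N(T)|\ge|T|$ as required; the case $T=\emptyset$ is trivial.

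With the marriage condition in hand, Hall's theorem supplies the desired injection $\sigma$, and the relabelling described above finishes the proof. The main point to get right is the translation between the combinatorial hypothesis and the Hall condition: the hypothesis is phrased purely in terms of the vanishing of the restrictions $f_j^{E}$, and one must argue that a nonvanishing $f_j^{T}$ genuinely contributes a neighbour to $T$ in $G$. This is precisely where the positive-degree (no constant term) property enters, and it is worth flagging that the statement would fail for arbitrary polynomials — for instance a single nonzero constant $f_1$ satisfies the hypothesis with $\mu=1$ yet no variable appears in $f_1^{S}$ — so the homogeneity of the defining equations, rather than any deeper geometry, is the feature that makes the lemma true.
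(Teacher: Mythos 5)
Your proof is correct, and it packages the argument differently from the paper. You read the conclusion as the existence of a system of distinct representatives and reduce everything to Hall's marriage theorem: for each $T\subseteq\{0,\dots,\mu-1\}$ the hypothesis gives at least $|T|$ indices $j$ with $f_j^{T}\neq 0$, and since the $f_j$ have no constant term, each such $f_j^{T}$ contains a positive-degree monomial in the variables $x_i$, $i\in T$, which survives in $f_j^{\{0,\dots,\mu-1\}}$ and makes $j$ a neighbour of $T$; Hall's condition follows and the matching gives the relabelling. The paper instead proves the matching statement by hand: an induction on $\mu$ whose inductive step constructs a chain of indices $\sigma(1),\dots,\sigma(\omega+1)$ and then, through an inner claim producing bijections $v_i$, reshuffles the representatives so that the new polynomial $f_\lambda$ (some $\lambda\geq\mu$) can be slotted in — this is precisely the augmenting-path argument underlying the standard proof of Hall's theorem, specialised to this setting. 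So the combinatorial core is the same; your version buys brevity and conceptual clarity by invoking the standard theorem, while the paper's is self-contained at the cost of a fairly involved nested induction. Your closing caveat is also accurate and worth keeping: as literally stated (for arbitrary polynomials) the lemma fails — a nonzero constant $f_1$ with $\mu=1$ is a counterexample — and the implicit hypothesis that the $f_j$ have no constant term is exactly what both your Hall-condition check and the paper's own proof (its base case, and the step producing $R_{\sigma(1)}^{\{\mu-1\}}\neq\emptyset$ from nonvanishing of some $f_j^{\{\mu-1\}}$) rely on; it holds in the intended application because the $f_j(t)$ are homogeneous of positive degree.
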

\begin{proof}
For each $j=1,...,c$ and subset $E$ of $\{0,1,...,n\}$, we define a subset $R_j^E:=\{\ i\ |\ f_j^E \mbox{ has a monomial involving the variable }x_i \}.$
We need to show that up to renumbering the indices of $1,...,c$,  $j-1\in R_j^{\{0,...,\mu-1\}}$ for all $j=1,...,c$.

We prove by induction on $\mu$. It is obvious for $\mu=1$. We assume that the statement holds for $\mu -1$.
By rearranging the indices of $1,...,c$, we may assume that $j-1 \in R_j^{\{0,...,\mu-2\}}$ for all $j=1,...,\mu-1$.
For all $\lambda=\mu,...,c$, we may assume that $\mu-1\not\in R_\lambda^{\{0,....,\mu-1\}}$.
Otherwise, by replacing $\lambda$ by $\mu$, the statement is true.

So there is an integer $\sigma(1)<\mu$ such that $R_{\sigma(1)}^{\{\mu-1\}}\neq \emptyset$.
If $R_{\lambda}^{\{\sigma(1)-1,\mu-1\}}\neq \emptyset$ for some $\lambda\geq \mu$, we put $\omega=0$.
Otherwise, by the assumption, there exists a $\sigma(2)\in \{1,...,\mu-1\}-\{\sigma(1)\}$ with $R_{\sigma(2)}^{\{\sigma(1)-1,\mu-1\}}\neq \emptyset$.
If $R_{\lambda}^{\{\sigma(1)-1,\sigma(2)-1,\mu-1\}}\neq \emptyset$ for some $\lambda\geq \mu$, we put $\omega=1$.
Otherwise, there exists a $\sigma(3)\in \{1,...,\mu-1\}-\{\sigma(1),\sigma(2)\}$ with $R_{\sigma(3)}^{\{\sigma(1)-1,\sigma(2)-1,\mu-1\}}\neq \emptyset$.
Combining the assumption that there exists at least $\mu$ nonempty subsets $R_j^{\{0,...,\mu-1\}}$ among all subsets $R_1^{\{0,...,\mu-1\}},...,R_c^{\{0,...,\mu-1\}}$,
we can define inductively an integer $\omega<\mu-1$ and distinct positive integers $\sigma(1),...,\sigma(\omega+1)< \mu$ satisfying the following two conditions:
\begin{enumerate}
\item all the subsets $R_{\sigma(1)}^{\{\mu-1\}},...,R_{\sigma(\omega+1)}^{\{\sigma(1)-1,...,\sigma(\omega)-1,\mu-1\}}$ are not empty;
\item $R_\lambda^{\{\sigma(1)-1,...,\sigma(\omega)-1,\mu-1\}}=\emptyset$ for all $\lambda\geq \mu$.
But $R_{\lambda}^{\{\sigma(1)-1,...,\sigma(\omega+1)-1,\mu-1\}}$ contains an element $\sigma(\omega+1)-1$ for some $\lambda\geq \mu$.
\end{enumerate}

\begin{claim}
For all $i\in\{\sigma(1)-1,...,\sigma(\omega+1)-1,\mu-1\}$,
there exists a bijection
$v_{i}:\{1,...,\omega+1\}\ \rightarrow \{\sigma(1)-1,...,\sigma(\omega+1)-1,\mu-1\}-\{i\}$
such that $v_i(j)\in R_{\sigma(j)}^{\{0,...,\mu-1\}}$ for all $j=1,...,\omega+1$.
\end{claim}
\begin{proof}
We prove this by induction on $\omega$. This is clear when $\omega=0$. Suppose the statement is true for $\omega-1$.
If $i\neq\sigma(\omega+1)-1$, by induction hypothesis we have a bijection $v_i$ from $\{j=1,...,\omega\}$ to $\{\sigma(1)-1,...,\sigma(\omega)-1,\mu-1\}-\{i\}$ satisfying that
$v_i(j)\in R_{\sigma(j)}^{\{0,...,\mu-1\}}$ for all $j=1,...,\omega.$
We obtain the result by setting $v_i(\omega+1)=\sigma(\omega+1)-1$ since $\sigma(\omega+1)-1\in R_{\sigma(\omega+1)}^{\{0,...,\mu-1\}}$.

If $i=\sigma(\omega+1)-1$, condition (1) shows that $R_{\sigma(\omega+1)}^{\{\sigma(1)-1,...,\sigma(\omega)-1,\mu-1\}}$ contains an element, say $\tau$.
Note that $\tau\neq \sigma(\omega+1)-1$.
By induction there exists a bijection $v_\tau$ from $\{j=1,...,\omega\}$ to $\{\sigma(1)-1,...,\sigma(\omega)-1,\mu-1\}-\{\tau\}$ satisfying
$v_\tau(j)\in R_{\sigma(j)}^{\{0,...,\mu-1\}}$ for all $j=1,...,\omega.$
Put $v_i(j)=v_\tau(j)$ for $j=1,...,\omega$ and $v_i(\omega+1)=\tau$. We prove this claim.
\end{proof}
From condition (2) and the claim, $\sigma(\omega+1)-1\in R_\lambda^{\{0,...,\mu-1\}}$ for some $\lambda>\mu-1$ and there exists a bijection
$v_{\sigma(\omega+1)-1}:\{1,...,\omega+1\}\ \rightarrow \{\sigma(1)-1,...,\sigma(\omega)-1,\mu-1\}$
such that $v_{\sigma(\omega+1)-1}(j)\in R_{\sigma(j)}^{\{0,...,\mu-1\} }$ for all $j=1,...,\omega+1$.
Since $R_j^{\{0,...,\mu-1\}}$ is assumed to contain the element $j-1$ for all $j\in\{\omega+2,...,\mu-1\}-\{\sigma(1),...,\sigma(\omega+1)\}$,
by replacing $\lambda$ by $\mu$ and then renumbering the indices of $\sigma(1),...,\sigma(\omega+1),\mu$, we obtain the lemma.
\end{proof}

Suppose $\mu>0$.
Since $\mbox{P}_{\{0,...,\mu-1\}}\cap X(t)=\emptyset$ for all large $t$, condition (1) of Proposition \ref{quasismooth} holds.
By passing to a subsequence and renumbering indices of $1,...,c$,
we may assume that all polynomials $f_1^{\{0,...,\mu-1\}}(t),...,f_\mu^{\{0,...,\mu-1\}}(t)$ are not identically zero.
As the discussion in Remark \ref{mu<c} and by choosing a subsequence, there exist nonnegative integers $k_{j,i}$ independent of $t$ such that for all $t$,
\[
\begin{array}{llll}
f_1(t) &= \Pi_{i\in \{0,...,\mu-1\}}\ x_i^{k_{1,i}}+\mbox{others},\\
     &\vdots           \\
f_{\mu}(t)&= \Pi_{i\in \{0,...,\mu-1\}}\ x_i^{k_{\mu,i}}+\mbox{others}.
\end{array}
\]
Furthermore, we may assume $k_{j,j-1}\ge 1$ for all $j=1,...,\mu$ from following induction process in the proof of Lemma \ref{positivities}.
We shall show $\mbox{P}_{I}\cap X(t)\neq \emptyset$ for infinitely many $t$ by the following:

\begin{lem}\label{mu}
$\mu<\gamma+1=|I|.$
\end{lem}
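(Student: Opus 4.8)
The plan is to argue by contradiction, assuming $\mu=\gamma+1$, i.e.\ that $\mbox{P}_E\cap X(t)=\emptyset$ for every subset $E\subseteq I=\{0,\dots,\gamma\}$ and all large $t$. Note that this forces $\mu=\gamma+1\ge 1$, so the construction preceding the statement applies: using condition (1) of Proposition \ref{quasismooth} together with Lemma \ref{positivities}, and passing to a subsequence so that the exponents are constant in $t$ (legitimate since $A_i>0$ for every $i\in I$), one obtains exactly $|I|=\gamma+1$ of the defining polynomials, say $f_1,\dots,f_{\gamma+1}$ after renumbering, each carrying a pure monomial $M_j=\prod_{i\in I}x_i^{k_{j,i}}$ in the variables $\{x_i:i\in I\}$ with $k_{j,j-1}\ge 1$. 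Comparing degrees gives the identities $d_j(t)=\sum_{i\in I}k_{j,i}\,a_i(t)$ for $j=1,\dots,\gamma+1$, and substituting $a_i(t)=A_i+b_i(t)$ yields $d_j(t)=D_j+\sum_{i\in I}k_{j,i}\,b_i(t)$, where $D_j:=\sum_{i\in I}k_{j,i}A_i=\lim_t d_j(t)$.

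I would then try to reproduce the sign contradiction of Remark \ref{mu<c}. Summing these identities over $j$ gives $\sum_{j=1}^{\gamma+1}d_j(t)=\sum_{i\in I}K_i\,a_i(t)$ with $K_i:=\sum_{j=1}^{\gamma+1}k_{j,i}\ge 1$ for every $i\in I$, each column being hit by the term $k_{i+1,i}\ge 1$. Letting $t\to\infty$ and using $b_i(t)\to 0$ forces $\sum_j D_j=\sum_{i\in I}K_iA_i$, and hence the identity $\sum_{i\in I}K_i\,b_i(t)=\sum_{j=1}^{\gamma+1}d_j(t)-\sum_j D_j$ for every $t$. The aim is to pin down the right-hand side so that the left-hand side, a sum of the strictly negative $b_i(t)$ with coefficients $K_i\ge 1$, is forced to vanish, which is absurd. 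When the analysis collapses the multiplicities to $K_i=1$ for all $i\in I$, the nonnegative integer matrix $(k_{j,i})$ has all column sums equal to $1$ and all shifted-diagonal entries $\ge 1$, so it must place the single $1$ of each column on the diagonal; then $M_j=x_{j-1}$ and $d_j(t)=a_{j-1}(t)$, exhibiting $X$ as an intersection with a linear cone, excluded by the standing hypothesis. The alternative degeneration, in which several of the weights $a_i$ with $i\in I$ coincide, places a singular stratum of $\PPP$ inside $X$ and violates well-formedness, exactly as in the endgame of Proposition \ref{comparison}.

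The main obstacle is that, unlike in Remark \ref{mu<c}, only $\gamma+1<c$ of the equations are pure monomials, so the partial degree sum $\sum_{i\in I}K_i a_i(t)$ does not reproduce the full weight total $\sum_{i=0}^n a_i(t)=1$; the excess $\sum_{i\in I}(K_i-1)a_i(t)\ge 0$ must be controlled before the $A$-parts cancel. I expect this to be where the fine convergence data enter: properties (2) and (3) of $I$, namely the monotone ordering of the ratios $A_i/(-b_i(t))$ and the strict monotonicity of each $b_i(t)$, should be used to perform a leading-order asymptotic balance among the $b_i(t)$ as $t\to\infty$, isolating the coordinate whose weight shrinks fastest relatively and showing that any multiplicity $K_i>1$ would make some $d_j(t)$ decrease too quickly against the normalization $\sum_j d_j(t)=1$. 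Carrying out this asymptotic comparison, and thereby forcing $K_i=1$ (equivalently, driving the configuration into the linear-cone or non-well-formed contradiction above), is the technical heart of the argument.
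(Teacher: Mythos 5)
There is a genuine gap, and you have in fact flagged it yourself: the ``asymptotic comparison'' that would force $K_i=1$ for all $i\in I$ is exactly the step you never carry out, and it is not a technicality --- it is the whole proof. Worse, the route you propose cannot close. Since $\mu=\gamma+1<c$, only $\gamma+1$ of the degrees enter your identity $\sum_{i\in I}K_i b_i(t)=\sum_{j\le \gamma+1}d_j(t)-\sum_{j\le\gamma+1}D_j$, so the right-hand side is not pinned down by the normalization $\sum_{j=1}^c d_j(t)=1$: the remaining degrees $d_{\gamma+2}(t),\dots,d_c(t)$ absorb all the slack. That is why the sign trick of Remark \ref{mu<c} works there (where $\mu\ge c$ puts all $c$ degrees into the sum) and fails here. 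Moreover, nothing in the data you retain --- the pure monomials $\prod_{i\in I}x_i^{k_{j,i}}$ and properties (2), (3) of the sequences $b_i(t)$ --- forbids a column sum $K_i\ge 2$: for instance $f_1(t)=x_0^2x_1+\cdots$ with $d_1(t)=2a_0(t)+a_1(t)$ is perfectly compatible with all of it, since $d_1(t)$ is then just another strictly increasing sequence. No leading-order balance among the $b_i(t)$ alone can exclude this; the contradiction has to come from quasismoothness somewhere outside the stratum $\mbox{P}_I$, which your proposal never invokes. (Your fallback appeal to well-formedness is likewise unsupported: no coincidence of weights is forced in this situation.)

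What the paper actually does, after the same setup as yours, is to look at the complementary stratum $F:=\{0,\dots,n\}-(I\cup Z)$. Each $d_j(t)$, $j\le\gamma+1$, is strictly increasing in $t$ (from $k_{j,j-1}\ge1$ and property (3)), hence stays strictly below its limit; since $b_i(t)\ge 0$ for $i\notin I$, any monomial supported in $F$ has degree at least equal to its limit, so $f_j^F(t)\equiv 0$ for $j\le\gamma+1$ and large $t$. Because $|Z|\le m$ (this is where Proposition \ref{comparison} enters), $F$ is nonempty and $\mbox{P}_F\cap X(t)$ is cut out by only $c-(\gamma+1)$ equations, so it has dimension $\ge 0$; quasismoothness there forces condition (2) of Proposition \ref{quasismooth}, i.e.\ each $f_j(t)$, $j\le\gamma+1$, contains a monomial $x_{e_j}\prod_{i\in F}x_i^{q_{j,i}}$ with the $e_j$ distinct and $e_j\notin F$. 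If some $e_j\notin I$, then $d_j(t)\ge \lim_t d_j(t)$, contradicting strict increase. If all $e_j\in I$, then $\{e_1,\dots,e_{\gamma+1}\}=I$, and $\sum_j\bigl(d_j(t)-a_{j-1}(t)\bigr)$ is strictly increasing (this is where the not-a-linear-cone hypothesis is used, jointly with $k_{j,j-1}\ge1$), while the same quantity equals $\sum_j\sum_{i\in F}q_{j,i}a_i(t)$, which never drops below its limit --- a contradiction. Without some substitute for this use of quasismoothness along $\mbox{P}_F$, your argument cannot be completed.
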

\begin{proof}
Suppose on the contrary that $\mu=\gamma+1$. For infinite $t$, we may assume that for $j=1,...,\gamma+1$,
\[ f_{j}(t)= \Pi_{i\in \{0,...,\gamma\}}\ x_i^{k_{j,i}}+\mbox{others},
\]
where the nonnegative integers $k_{j,i}$ are independent of $t$ and $k_{j,j-1}\ge 1$.
Since $b_i(t)$ is strictly increasing for all $i\in I$ as a function of $t$, by taking a subsequence and counting degrees,
we may assume that each $d_1(t),...,d_{\gamma+1}(t)$ is strictly increasing.
On the other hand, we define $F:=\{0,...,n\}-(I\cup Z)$.
From the condition $\sum_{i=0}^n A_i=1$ and the upper bound $|Z|\leq m=\dim X$, $F$ is not empty from Remark $\ref{mu<c}$.
Since each $d_1(t),...,d_{\gamma+1}(t)$ is strictly increasing, the polynomials $f^F_1(t),...,f^F_{\gamma+1}(t)$ are all identically zero for all large $t$, i.e,
\[
\mbox{P}_F\cap X(t)=\mbox{P}_F\cap (f_{\gamma+2}(t)=\cdots =f_{c}(t)=0).
\]

Hence $\mbox{P}_F\cap X(t)$ has dimension $\geq n-(\gamma+1)-m-(c-(\gamma+1))=0$.
From the quasismoothness on $\mbox{P}_F\cap X(t)$, condition (2) of Proposition \ref{quasismooth} holds for all large $t$.
In particular, for $j=1,...,\gamma+1$, $f_j(t)$ has monomial $x_{e_j}\Pi_{i\in F}\ x_i^{q_{j,i}}$ for infinite $t$ where
the nonnegative integers $q_{j,i},e_j$ are independent of $t$ and
$e_1,...,e_{\gamma+1}$ are mutually distinct.
By counting degrees, this shows that for $j=1,...,\gamma+1$,
\[d_j(t)=a_{e_j}+\sum_{i\in F} q_{j,i}a_i(t)\mbox{ for some }e_j\in I\cup Z.
\]
If $e_j\notin I$ for some $j=1,...,\gamma+1$, we see that $d_j(t)$ is not strictly increasing as a function of $t$.
In this case, we get the desired contradiction.
The remaining case is that $e_j\in I$ for all $j=1,...,\gamma+1$.
From the same reason, the function
$$\sum_{j=1}^{\gamma+1}(d_j(t)-a_{j-1}(t))=\sum_{j=1}^{\gamma+1}d_j(t)-\sum_{j=1}^{\gamma+1}a_{e_j}(t)=\sum_{j=1}^{\gamma+1}\sum_{i\in F}q_{j,i}a_i(t)$$
is not increasing.
On the other hand,
$\sum_{j=1}^{\gamma+1}(d_j(t)-a_{j-1}(t))$ is also strictly increasing by the assumption that $X(t)$ is not an intersection of a linear cone and another subvariety
and the positivity $k_{j,j-1}\geq 1$ for all $j=1,...,\gamma+1$.
We derive a contradiction and prove the lemma.
\end{proof}

From the definition of $\mu$ and Lemma \ref{mu}, the quasismoothness on $\mbox{P}_{D}\cap X(t)$ implies condition (2) of Proposition \ref{quasismooth}
where $D$ is the stratum $\{0,...,\mu-1,\mu\}$.
In particular, by passing to a subsequence and rearranging indices of $\mu +1,...,c$,
there exist integers $s$ with $\mu\leq s<c$ and $k_{j,i},e_j$ independent of $t$ such that
\[ \left\{
\begin{array}{ll}
f_j(t)=\Pi_{i\in D}\ x_i^{k_{j,i}}+\mbox{others} & \mbox{for } \mu < j \leq s, \\
f_j(t)=x_{e_j}\Pi_{i\in D}\ x_i^{k_{j,i}}+\mbox{others},\mbox{ with } e_j\ge \mu+1 & \mbox{for } s< j \leq c.
\end{array} \right. \]
Here the integers $e_{s+1},...,e_c$ are mutually distinct and also $s=0$ if and only if $\mu=0$ from the definition.
Note that such an integer $s$ with $s<c$ exists because some of the degree functions $d_1(t),...,d_c(t)$ is not strictly increasing.
In particular, by counting degrees of $f_1(t),...,f_{c}(t)$ we obtain
\[1=\sum_{j=1}^{c}\ d_j(t)=\sum_{i=0}^\mu K_i a_i(t)+\sum_{i=s+1}^{c}\ a_{e_i}(t)\ \ \mbox{for infinite }t,  \eqno{(\dag\ 1)}\]
where $K_i:=\sum_{j=1}^c k_{j,i}\geq 1$ for $i=0,...,\mu-1$.
One has the inequality:
\begin{align*}
& \sum_{i=0}^{\mu-1}\ (1-K_i)b_i(t)+\sum_{i\in I-\{0,...,\mu-1,e_{s+1},...,e_c\}} \ b_i(t) \\
&=-\sum_{i=0}^{\mu-1}\ K_ib_i(t)-\sum_{i\in I\cap \{e_{s+1},...,e_c\}} b_i(t)-\sum_{i\notin I}b_i(t) \\
&\leq -\sum_{i=0}^{\mu-1}\ K_ib_i(t)-\sum_{i\in \{e_{s+1},...,e_c\}} b_i(t)= K_{\mu} b_{\mu}(t).
\hspace{3.6cm}    \dotfill (\dag\ 2)
\end{align*}
So
\begin{align*}
K_\mu A_\mu &\leq \sum_{i=0}^{\mu-1}\ (1-K_i)b_i(t)\frac{A_\mu}{b_{\mu}(t)}+\sum_{i\in I-\{0,...,\mu-1,e_{s+1},...,e_c\}} \ b_i(t)\frac{A_\mu}{b_{\mu}(t)} \notag \\
&\leq \sum_{i=0}^{\mu-1}\ (1-K_i)A_i+\sum_{i\in I-\{0,...,\mu-1,e_{s+1},...,e_c\}} \ A_i.   \hspace{2.8cm}  \dotfill (\dag\ 3) \\
\end{align*}
From $(\dag\ 1)$ and $(\dag\ 3)$, we have

\begin{align*}
1&=\sum_{i=0}^{\mu-1}\ K_iA_i+ K_{\mu}A_{\mu}+\sum_{i=s+1}^{c} A_{e_i}  \\
&\leq \sum_{i=0}^{\mu-1}\ (K_i+(1-K_i))A_i+\sum_{i\in I-\{0,...,\mu-1,e_{s+1},...,e_c\}} \ A_i +\sum_{i=s+1}^{c} A_{e_i}  \\
&\leq \sum_{i=0}^{\mu-1}\ A_i+\sum_{i=\mu}^{n}\ A_i=1.  \hspace{7cm}    \dotfill (\dag\ 4)\\
\end{align*}

Hence all inequalities in $(\dag\ 2),(\dag\ 3),(\dag\ 4)$ are actually equalities.
For all $k\notin I\cup \{e_{s+1},...,e_c\}$, from $(\dag\ 4)$ and $(\dag\ 2)$,
we have $A_k=0$ and $b_k(t)=0$ for infinite $t$ respectively.
This gives a contradiction to $a_k(t)=A_k+b_k(t)=0$ for infinite $t$.
So $I\cup \{e_{s+1},...,e_c\}=\{0,...,n\}$.
From $(\dag\ 3)$, we get the equalities: for infinite $t$,
\[
\frac{-b_0(t)}{A_0}=\cdots=\frac{-b_{\mu}(t)}{A_{\mu}}=\frac{-b_k(t)}{A_k},\ k\in I-\{e_{s+1},...,e_c\}.
\]
Hence we obtain a subsequence
\[
(a_0(t),...,a_n(t))=(A_0(1-b(t)),...,A_{m+s}(1-b(t)),a_{m+s+1}(t),...,a_n(t)),
\]
where $b(t):=\frac{-b_{\mu}(t)}{A_{\mu}}>0$.
By rearranging the indices of $m+s+1,...,n$, we may assume $e_j=m+j$ for all $j=s+1,...,c$.
Again, after passing to a subsequence and rearranging indices of $m+s+1,...,c$,
we define $p\geq s$ to be the integer satisfying for all $t$,
\[  \left\{
\begin{array}{ll}
a_{i}(t)= A_i(1-b(t))\mbox{ for all }i\leq m+p, \\
a_{i}(t)\neq A_i(1-b(t))\mbox{ for all }i> m+p.
\end{array} \right. \]
From the choice of $p$, we observe $\{0,...,m+p\}\subseteq I$.
In particular, we obtain $p<c$ since by definition $\sum_{i=0}^n b_i(t)=0$.
\begin{claim}
For all $j> p$ and sufficient large $t$, every monomial of $f_j(t)$ involves at least one of the variables $x_{m+p+1},...,x_n$.
\end{claim}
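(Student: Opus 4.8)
The plan is to argue by contradiction, playing off the two independent ways one can read the common degree $d_j(t)$ of $f_j(t)$ for an index $j>p$. Suppose the claim fails: for some fixed $j>p$ there are infinitely many $t$ for which $f_j(t)$ carries a monomial $M_t=\prod_{i=0}^{m+p}x_i^{q_i(t)}$ involving only the variables $x_0,\dots,x_{m+p}$. First I would note that $\{0,\dots,m+p\}\subseteq I$ and $I\cap Z=\emptyset$ (otherwise some weight $a_i(t)=A_i+b_i(t)=b_i(t)<0$), so $A_i>0$ for all $i\le m+p$; combined with $d_j(t)\le\sum_{l}d_l(t)=1$ this bounds the exponents $q_i(t)$. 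Passing to a subsequence I may therefore assume $q_i(t)=q_i$ is independent of $t$, and since $a_i(t)=A_i(1-b(t))$ for every $i\le m+p$ I obtain the first reading
$$d_j(t)=\deg M_t=\sum_{i=0}^{m+p}q_i\,a_i(t)=(1-b(t))\sum_{i=0}^{m+p}q_iA_i=:(1-b(t))\,C_j,$$
with $C_j$ a positive constant.

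For the second reading I use that $j>p\ge s$, so $f_j(t)$ lies in the regime produced by quasismoothness on $\mbox{P}_{D}\cap X(t)$ and carries the monomial $x_{m+j}\prod_{i\in D}x_i^{k_{j,i}}$, where $e_j=m+j$ and $D=\{0,\dots,\mu\}\subseteq\{0,\dots,m+p\}$. Hence
$$d_j(t)=a_{m+j}(t)+(1-b(t))\sum_{i\in D}k_{j,i}A_i .$$
Equating the two expressions for $d_j(t)$ yields
$$a_{m+j}(t)=(1-b(t))\Big(C_j-\sum_{i\in D}k_{j,i}A_i\Big).$$

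The decisive step is to let $t\to\infty$: since $b(t)\to 0$ and $a_{m+j}(t)\to A_{m+j}$, the (constant) factor on the right is forced to equal $A_{m+j}$, so $a_{m+j}(t)=A_{m+j}(1-b(t))$ for infinitely many $t$. But $m+j>m+p$, and by the very definition of $p$ one has $a_i(t)\neq A_i(1-b(t))$ for every $i>m+p$ and every $t$; this contradiction proves the claim. The only real subtlety, and the step I expect to need the most care, is the bookkeeping of subsequences: one must fix the exponents $q_i$ of the offending monomial along a subsequence while \emph{keeping} that subsequence compatible with the earlier normalizations that fixed $s$, $p$ and the identities $e_j=m+j$, so that both readings of $d_j(t)$ hold simultaneously along the same infinite set of $t$.
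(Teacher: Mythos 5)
Your proof is correct and takes essentially the same route as the paper: both read the degree $d_j(t)$ in two ways (once via the offending monomial in $x_0,\dots,x_{m+p}$, once via the monomial $x_{m+j}\prod_{i\in D}x_i^{k_{j,i}}$ coming from quasismoothness) to conclude that $a_{m+j}(t)$ equals $(1-b(t))$ times a constant independent of $t$, contradicting the definition of $p$. The only difference is that you make explicit two details the paper leaves implicit, namely the subsequence argument fixing the exponents $q_i$ and the limit $t\to\infty$ identifying that constant as $A_{m+j}$.
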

\begin{proof}
Since $\{0,...,m+p\}$ is a subset of $I$, $A_i>0$ for all $i=0,...,m+p$.
Suppose there is a monomial $\Pi_{i\in \{0,...,m+p\}}\ x_i^{q_{j,i}}$ of $f_j(t)$ for infinite $t$ for some fixed $j>p$.
From the construction of $e_j$, $f_j(t)=x_{e_j}\Pi_{i\in D}\ x_i^{k_{j,i}}+\mbox{others}$.
By counting degrees, we see that
\begin{align*}
a_{m+j}(t)&=a_{e_j}(t)=d_j(t)-\sum_{i\in D}k_{j,i}A_i(1-b(t))\\
&=(1-b(t))(\sum_{i=0}^{m+p} q_{j,i}A_i-\sum_{i\in D}k_{j,i}A_i).
\end{align*}
This contradicts with the definition of $p$.
\end{proof}

For sufficient large $t$ and for $j=p+1,...,c$, we write
\[
f_j(t)=\sum_{i=m+p+1}^n g_j^i(t)(x_0,...,x_{m+p})x_i+l_j(t)(x_0,...,x_n),
\]
where 
$\deg_{x_{m+p+1},...,x_n}(l_j(t)(x_0,...,x_n))\geq 2.$
Define $E_p:=\{0,...,m+p\}$ and $G(t)$ to be the matrix
\[
\left(
  \begin{array}{ccc}
    g_{p+1}^{m+p+1}(t) & \cdots & g_{p+1}^n(t) \\
    \vdots &  &\vdots  \\
    g_{c}^{m+p+1}(t) & \cdots & g_{c}^n(t) \\
  \end{array}
\right).
\]
On the general points in the affine cone of $X(t)\cap \mbox{P}_{E_p}$, the Jacobian matrice are of the form
\[
\left(
  \begin{array}{cc}
    * &  *  \\
    O & G(t) \\
  \end{array}
\right).
\]
Then for sufficient large $t$,
the non-quasismooth locus of $X(t)$ contains $\mbox{P}_{E_p} \cap X(t)\cap (\det G(t)=0)=\mbox{P}_{E_p}\cap (f_1(t)=\cdots =f_{p}(t)=0)\cap (\det G(t)=0)$ which has dimension $\geq m+p-p-1 \geq 0$.
We get the contradiction and therefore prove Theorem \ref{CY}.
\end{proof}


\begin{thebibliography}{99}

\normalsize

\bibitem{AlexB} V. Alexeev, {\em Boundedness and $K^2$ for log surfaces}, Intern. J. Math. {\bf 5}(6) (1994), 779-810.


\bibitem{elephant} V. Alexeev, {\em General elephants of $\mathbb{Q}$-Fano 3-folds}, Compositio Math. {\bf 91} (1994), no. 1, 91-116.



\bibitem{SGM} S. Altinok, G. Brown, M. Reid, {\em Fano 3-folds, K3 surfaces and graded rings.} Topology and geometry: commemorating SISTAG, 25-53,
Contemp. Math., 314, Amer. Math. Soc., Providence, RI, 2002.

\bibitem{Borisov} A. Borisov, {\em Boundedness of Fano threefolds with log-terminal singularities of given index}, J. Math. Sci. Univ. Tokyo {\bf 8} (2001), no. 2, 329-342.

\bibitem{datebase} G. Brown, {\em Graded ring database}, http://grdb.lboro.ac.uk/ + links.

\bibitem{Icecream} A. Buckley, M. Reid, S. Zhou, {\em Ice cream and orbifold Riemann-Roch,} arXiv:1208.0457, 29pp.

\bibitem{explicit1} J. A. Chen, M. Chen, {\em Explicit birational geometry of 3-folds of general type, I}, Ann Sci Ecole Norm Sup. {\bf 43} 2010, 365-394.
arXiv: 0810.5041

\bibitem{AIM} J. A. Chen, M. Chen, {\em An optimal boundedness on weak $\bQ$-Fano 3-folds}, Adv. Math. {\bf 219}(2008), 2086-2104. arXiv: 0712.4356.

\bibitem{CCC} J.J. Chen, J.A. Chen, M. Chen, {\em On quasismooth weighted complete intersections}, Jour. Alg. Geom., {\bf 20} (2011), 239-262. arXiv 0908.1439.


\bibitem{WPS} I. Dolgachev,  {\em Weighted projective space}, Group actions and vector fields, Proc. Vancouver 1981 LNM {\bf 956}, 34-71 Springer Verlag.

\bibitem{HM} C.D. Hacon, J. Mckernan, {\em Boundedness of pluricanonical maps of varieties of general type}, Invent. Math. {\bf 166} no. 1 (2006), 1-25.

\bibitem{Fletcher} A. R. Iano-Fletcher, {\em Working with weighted complete intersections},
Explicit birational geometry of 3-folds, 101-173, London Math. Soc. Lecture Note Series, {\bf 281}. Cambridge
University Press, Cambridge, 2000.

\bibitem{KJ2} J. M. Johnson, J. Koll\'{a}r, {\em  K$\ddot{a}$hler-Einstein metrics on log del pezzo surfaces in weighted projective 3-spaces},
Ann. Inst. Fourier (Grenoble) {\bf 51} (2001), no. 1, 69-79.

\bibitem{KJ} J. M. Johnson, J. Koll\'{a}r, {\em Fano hypersurfaces in weighted projective 4-spaces}, Exp. Math. {\bf 10} (2001) 151-158.

\bibitem{Takuzo} T. Okada, {\em Nonrational weighted hypersurfaces}, Nagoya Math. J. {\bf 194} (2009) 1-32.

\bibitem{C3-f} M. Reid, {\em Canonical 3-folds}, Journ\'ees de G\'eom\'etrie Alg\'ebrique d'Angers, A. Beauville (editor), Sijthoff and Noordhoff, Alphen aan den Rijn, 1980, pp. 273-310.

\bibitem{YPG} M. Reid,  {\em Young person's guide to canonical singularities}, Proc. Symposia in pure Math. {\bf 46}(1987), 345-414.


\bibitem{Takayama} S. Takayama, {\em Pluricanonical systems on algebraic varieties of general type}, Invent. Math. {\bf 165} no. 3 (2006), 551-587.

\bibitem{Tsuji} H. Tsuji, {\em Pluricanonical systems of projective varieties of general type I}, Osaka J. Math. {\bf 43}, no. 4 (2006), 967-995.

\end{thebibliography}
\end{document}